\DeclareMathOperator\td{Td}
\DeclareMathOperator\Td{Td}
\DeclareMathOperator\coll{co}
\DeclareMathOperator\mC{mC}
\DeclareMathOperator\mc{mC}
\DeclareMathOperator\csm{c^{sm}}
\DeclareMathOperator\T{\mathbb T}
\def\omeg{\includegraphics[width=5mm]{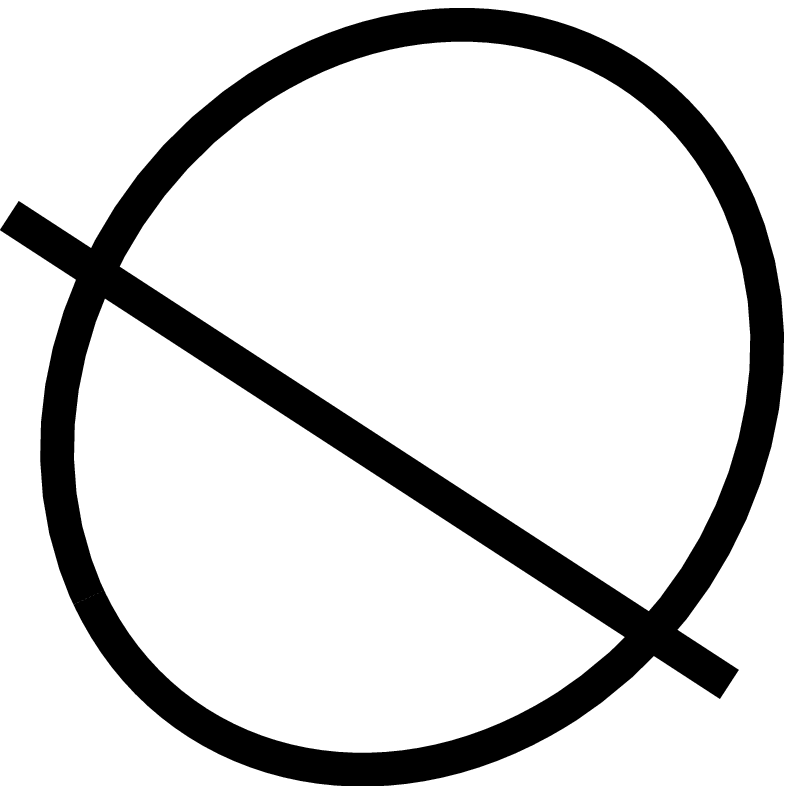}}
\def\tri{\includegraphics[width=5mm]{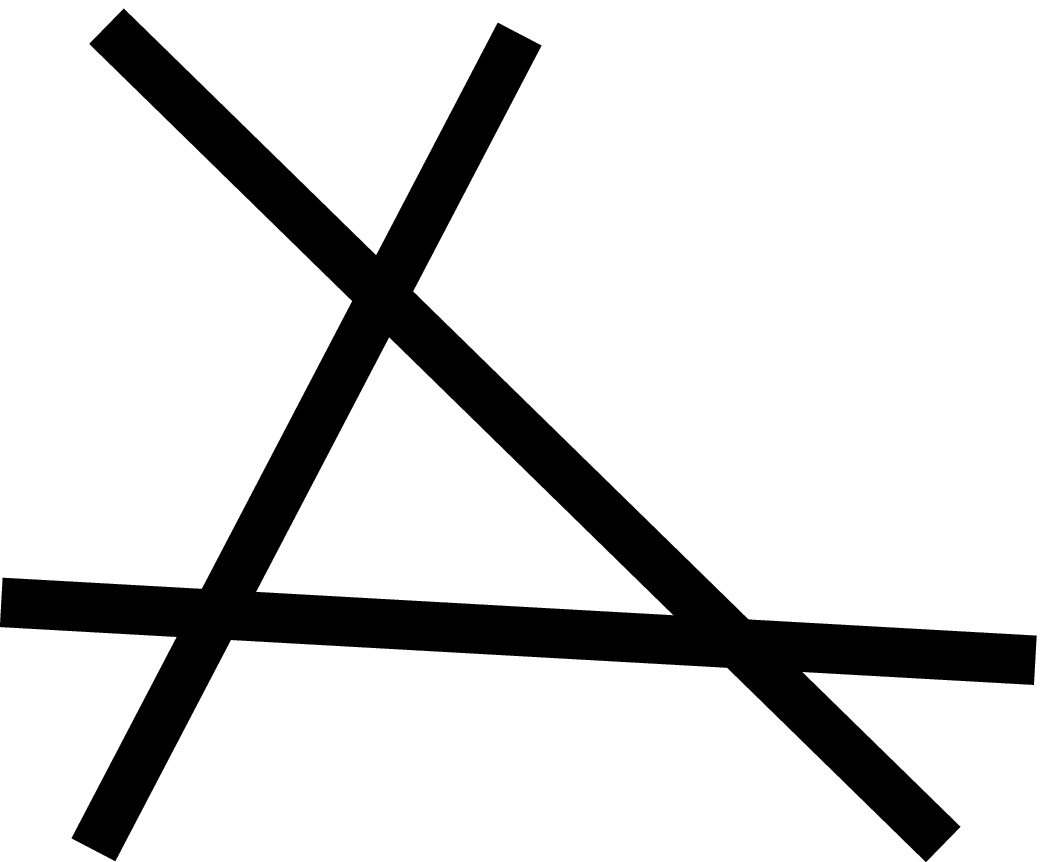}}
\def\tangent{\includegraphics[width=5mm]{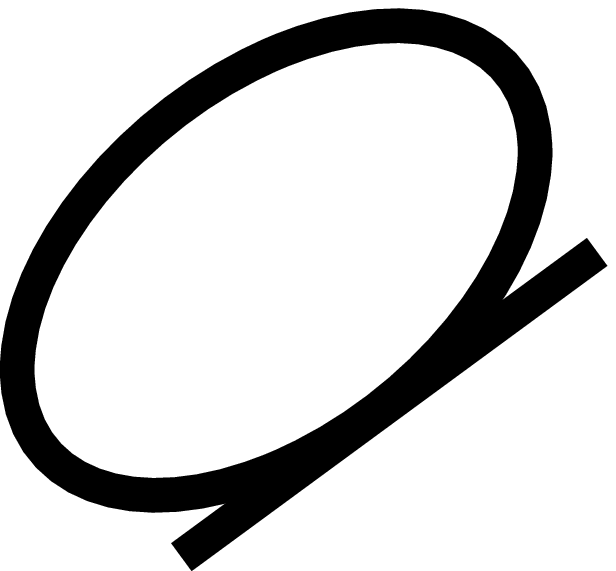}}
\def\concurrent{\includegraphics[width=5mm]{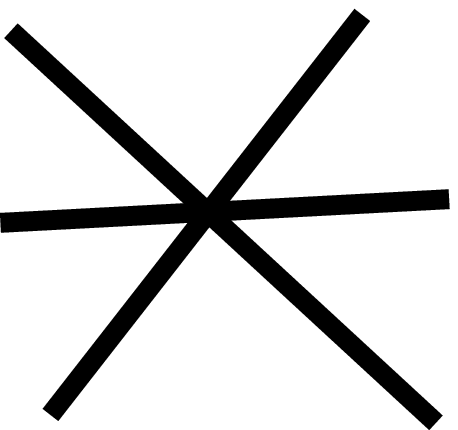}}
\def\cusp{\includegraphics[width=5mm]{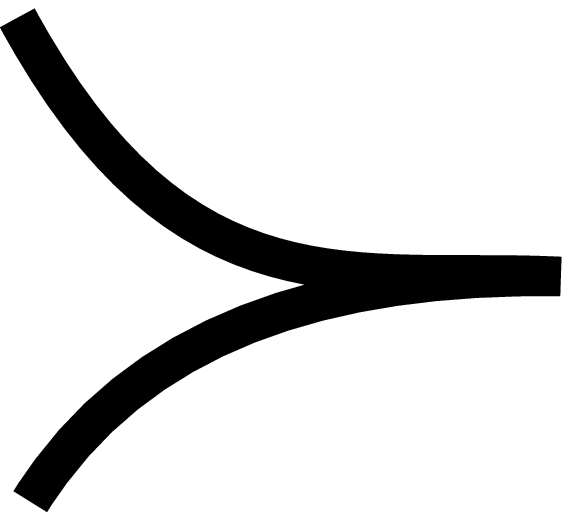}}
\def\node{\includegraphics[width=6mm]{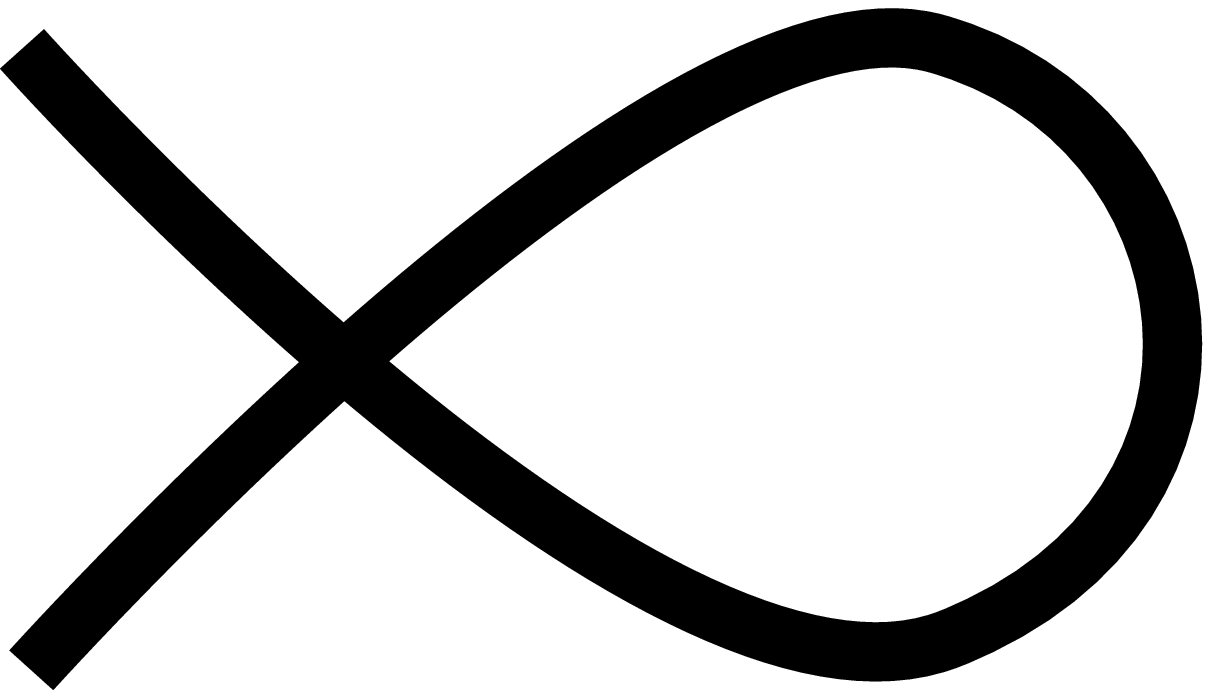}}
\author{L\'aszl\'o M. Feh\'er}
\address{Institute of Mathematics, E\"otv\"os University Budapest, Hungary}
\email{lfeher63@gmail.com}
\keywords{Motivic Chern classes of  varieties, K-theory, K-theory class of a subvariety, projective Thom polynomial}
\subjclass[2020]{19E15 }
\begin{document}
\begin{abstract}
  We study motivic Chern classes of cones. First we show examples of projective cones of smooth curves such that their various $K$-classes (sheaf theoretic, push-forward and motivic) are all different. Then we show connections between the torus equivariant motivic Chern class of a projective variety and of its affine cone, generalizing results on projective Thom polynomials.
\end{abstract}
\title{Motivic Chern classes of cones}
\maketitle
\tableofcontents

\section{Introduction}
We study two different topics in this paper. The common technical issue is to understand the motivic properties of cones. Equivariant motivic classes of cones were studied in \cite{Weber=EquivariantHirzebruch}   previously. Our results are related, but the philosophy is somewhat different. We try to stay in $K$-theory without using the transition to cohomology using the Chern character. We hope to convince the reader that some of the arguments are more transparent in $K$-theory.

In the first part we introduce three different notions of the $K$-class of a projective subvariety, and show by examples that they are different. We explain their connection with classical algebraic geometric invariants as the Hilbert function and polynomial, and the arithmetic genus. We discuss the equivariant version, too, study the transversality properties and how these properties connected to $K$-theoretic Thom polynomials.

The second part is an attempt to introduce the motivic version of the projective Thom polynomial. The cohomological projective Thom polynomial was introduced by Andr\'as N\'emethi, Rich\'ard Rim\'anyi and the author in \cite{fnr-forms} and used later in other projects. I hope that this motivic version will be just as useful in applications.

I am grateful for Andrzej Weber for patiently explaining me the intricacies of the motivic Chern class. I had inspiring conversations on the topic with Rich\'ard Rim\'anyi, Andr\'as N\'emethi, \'Akos Matszangosz and  Bal\'azs K\H om\H uves. A special case of Theorem \ref{g-proj2affine} was first proved by the latter. I  thank Anders Buch for explaining the role of the Cohen-Macaulay condition in pulling back the sheaf theory $K$-class.

I was partially supported by NKFI 112703 and 112735 as well as ERC Advanced Grant LTDBud and enjoyed the hospitality of the R\'enyi Institute.

\section{What is the $K$-class of a subvariety?}
There are several candidates for the $K$-class of a subvariety of an ambient smooth variety $M$. We show that they are different and have different functorial properties.

\subsection{Algebraic $K$-theory and the sheaf $K$-class}
First we recall the basic constructions in algebraic $K$-theory following \cite[\S 15.1]{Fulton-intersection}:

For any scheme $ X$, $K^0 X$ denotes the Grothendieck group of vector bundles (locally free sheaves) on $X$. Each vector bundle $E$ determines an element, denoted by $[E]$, in $K^0 X$. $K^0 X$ is the free abelian group on the set of isomorphism
classes of vector bundles, modulo the relations
\[ [E] =[E'] + [E''],\]

whenever $E'$ is a subbundle of a vector bundle $E$, with quotient bundle
$E'' = E/E'$. The tensor product makes $K^0 X$ a ring. For any morphism $f:Y\to X$ there is an induced pull-back homomorphism
\[  f^*:K^0 X \to K^0 Y,  \]
taking $[E]$ to $[f^*E]$, where $f^*E$ is the pull-back bundle; this makes $K^0$ a contravariant functor from schemes to commutative rings.

The Grothendieck group of coherent sheaves on $X$, denoted by $K_0X$, is defined to be the free abelian group on the isomorphism classes  of
coherent sheaves on X, modulo the relations

\[ [\mathcal{F}] =[\mathcal{F}'] + [\mathcal{F}''],\]
for each exact sequence
\[  0 \rightarrow \mathcal{F}'\rightarrow\mathcal{F}\rightarrow\mathcal{F}''\rightarrow0\]
of coherent sheaves.

For any proper morphism $f:X\to Y$, there is a push-forward homomorphism
\[  f_*:K_0 X \to K_0 Y,  \]
which takes $[\mathcal{F}]$ to $\sum_{i\geq0}(-1)^i[R^if_*\mathcal{F}]$, where $R^if_*\mathcal{F}$ is Grothendieck's higher
direct image sheaf.

On any X there is a canonical \quot{duality} homomorphism:
\[ K^0X\to K_0X\]

which takes a vector bundle to its sheaf of sections. When $X$ is non-singular,
this duality map is an isomorphism.
 The reason for this is that a coherent
sheaf $\mathcal{F}$  on a non-singular X has a finite resolution by locally free sheaves, i.e.,
there is an exact sequence
\[ 0\rightarrow E_n\rightarrow E_{n-1}\rightarrow\dots\rightarrow E_1\rightarrow E_0\rightarrow \mathcal{F}\rightarrow0  \]
with $E_0,\dots,E_n$ locally free. The inverse homomorphism from $K_0$ to $K^0$ takes $[\mathcal{F}]$ to $\sum_{i=0}^{n}(-1)^i[E_i]$, for such a resolution.

In this paper we only study the case when $X$ is non-singular, so we identify $K_0$ with $K^0$ and denote the pushforward by $f_!$. We can define the \emph{sheaf $K$-class} of a subvariety $Y$ by $[\mathcal{O}_Y]\in K_0(X)$.

\subsection{Topological $K$-theory} Topological $K$-theory is a complex oriented cohomology theory, which has several consequences.  Any complex vector bundle $E\to X$ has an Euler class $e(E)\in K_{\text{top}}(X)$. (In this notation we incorporated the fact that $K_{\text{top}}$ is 2-periodic.) The Euler class of a line bundle $L$ is given by $e(L)=1-[L^*]$. Similarly to ordinary cohomology a complex submanifold $Y$  of the  complex manifold $M$ represent a class $[Y\subset M]\in K_{\text{top}}(M)$. Given a complex vector bundle $E$ with a section $\sigma:M\to E$ transversal to the zero section we have $e(E)=[\sigma^{-1}(0)\subset M]$. We have an obvious map from algebraic $K$-theory to topological $K$-theory, which is an isomorphism for $\P^n$, so in the remaining of the paper we identify these rings and also drop the upper and lower 0 indices.

\begin{theorem}\label{top=alg-push}
The forgetful map $K\to K_{\text{top}}$ respects pushforward,
\end{theorem}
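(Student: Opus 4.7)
The natural approach is to reduce the statement to two elementary building blocks by factoring. A projective morphism $f:X\to Y$ between smooth varieties factors as a closed embedding $i: X \hookrightarrow \mathbb{P}^N \times Y$ followed by the projection $p: \mathbb{P}^N \times Y \to Y$. Since both algebraic and topological pushforwards are functorial in proper morphisms, it suffices to verify the naturality square separately for $i$ and for $p$; if $f$ is only proper but not projective, the same idea works using Chow's lemma plus a dévissage argument to match pushforwards on $f$ with those on a projective modification.

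For the closed embedding $i$, the algebraic pushforward $i_![\mathcal F]$ is computed from any finite locally free resolution $E_\bullet \to i_*\mathcal F$ on the smooth ambient variety, giving $i_![\mathcal F] = \sum (-1)^j [E_j]$. Because the forgetful map is the identity on classes of vector bundles, it sends this virtual combination to the same alternating sum in $K_{\text{top}}$. The key input is that the topological pushforward is characterized by the same formula: for a regular embedding with conormal bundle $N^*$, the Koszul complex identifies $i_!^{\text{top}}[\mathcal O_X]$ with $\sum(-1)^j [\Lambda^j N^*]$ via the Thom isomorphism for complex $K$-theory, and the case of a general coherent $\mathcal F$ follows by tensoring the Koszul resolution with $\mathcal F$ and inducting on the length of a resolution.

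For the projection $p: \mathbb{P}^N \times Y \to Y$, I would use the projective bundle formula to write $K(\mathbb{P}^N \times Y)$ as a free $K(Y)$-module on powers of $[\mathcal O(-1)]$. Both pushforwards satisfy the projection formula, so by $K(Y)$-linearity the claim reduces to computing $p_![\mathcal O(-k)]$ for the absolute projection $\mathbb{P}^N \to \mathrm{pt}$, where $K(\mathrm{pt}) = \mathbb Z$ on both sides. Algebraically this is the Euler characteristic $\sum_i (-1)^i \dim H^i(\mathbb{P}^N,\mathcal O(-k))$; topologically it can be read off the standard cell decomposition (or computed from Bott periodicity). A direct comparison shows the two integers agree, which combined with Step~2 yields the theorem.

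The main obstacle is the treatment of possibly singular $X$ in the immersion step: $i_!^{\text{top}}$ is most naturally defined for smooth subvarieties, so one must either adopt the definition via locally free resolutions on the ambient smooth space (which makes the compatibility nearly tautological) or appeal to resolution of singularities $\widetilde X \to X$ and check that both sides of the comparison descend, using that the forgetful map commutes with the pushforward along the resolution. Once a coherent definition is chosen, the regular-embedding calculation propagates and the theorem follows.
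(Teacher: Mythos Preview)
The paper does not prove this theorem at all: immediately after the statement it simply writes ``by \cite{hirzebruch1962riemann}'' and moves on. So there is no proof in the paper for you to match; the author treats this as a classical fact and defers entirely to the literature.

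Your sketch is a reasonable outline of the standard argument behind that citation (factor a projective morphism as closed immersion followed by projection, handle the immersion via Koszul/Thom, handle the projection via the projective bundle formula and a direct computation over a point). The paper even hints, in Section~\ref{sec:todd}, that the crucial ingredient is $\td(\P^n)=1$, ``usually proved using the topological Grothendieck--Riemann--Roch theorem''; this is exactly the content of your Step~3, where matching the algebraic and topological pushforwards of $[\mathcal O(-k)]$ along $\P^N\to\mathrm{pt}$ amounts to that computation. One minor comment: your final paragraph about ``possibly singular $X$'' is somewhat beside the point here, since in the paper's setup the pushforward is taken along morphisms between \emph{smooth} varieties and arbitrary coherent sheaves are handled via finite locally free resolutions on the source; the issue you raise does not actually arise in the statement being proved.
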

 by \cite{hirzebruch1962riemann}, in particular for a complex submanifold $Y$  of the  complex manifold $M$ we have
\[ [Y\subset M]=[\mathcal O_Y].\]
The main goal of this section is to explore how to define this class for non smooth subvarieties.
\subsection{The $K$-theory of $\mathbb{P}^n$} $K(\P^n)=\Z[t]/((1-t)^{n+1})$, where $t=[\gamma]$, the class of the tautological line bundle. The corresponding sheaf is $\mathcal O(-1)$. The dual bundle is $L=\mathcal O(1)$. $L$ has sections transversal to the zero section so we get that
\[ e(L)=1-t=[\P^{n-1}\subset \P^n ].\]
We will also denote this class by $H$, the class of the hyperplane. Therefore we also have the description $K(\P^n)=\Z[H]/(H^{n+1})$.

\subsection{Hilbert polynomial} We show now that for the subvariety $X\subset\P^n$ the class $[\mathcal O_X]$ contains the same information as the Hilbert polynomial of $X$.

For $X\subset \P^n$ let $S=\C[x_0,\dots,x_n]$ denote the ring of polynomials, $I_X\lhd S$ the ideal of $X$ and $S(X):=S/I_X$ the homogeneous coordinate ring of $X$. The coordinate ring is a graded ring $S(X)=\bigoplus S^j(X)$ and we would like to encode the dimensions $h_j(X):=\dim S^j(X)$ (i.e. $j\mapsto h_j(X)$ is the Hilbert function of $X$). Notice that the embedding of $X$ is encoded in the grading of $S(X)$. It is well-known that there is a unique polynomial $p_X(x)$---the Hilbert polynomial of $X$---such that $h_j(X)=p_X(j)$ for $j\gg0$. For us it will be more convenient to use the Hilbert series
\[HS(X):=\sum_{j=0}^{\infty} h_j(X)t^j.\]
For example for $X=\P^n$ we have $h_j(X)=\binom{n+j}{n}$ which is clearly a polynomial of degree $n$ in $j$. The coefficients are certain Stirling numbers. On the other hand the Hilbert series has a particularly simple form:
\begin{equation}\label{binomial}
HS(\P^n)=\sum \binom{n+j}{n}t^j=\frac{1}{(1-t)^{n+1}}.
\end{equation}

The key property of the Hilbert series is (see e.g. in \cite{miller2004combinatorial})
\begin{theorem}\label{kpoly}
  There is a unique polynomial (the $\mathcal{K}$-polynomial) $\mathcal{K}_X(t)$, such that
 \[HS(X)=\frac{\mathcal{K}_X(t)}{(1-t)^{n+1}}.\]
\end{theorem}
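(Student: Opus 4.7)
The plan is to produce $\mathcal{K}_X(t)$ from a finite graded free resolution of the coordinate ring $S(X)=S/I_X$ as a graded $S$-module, and then read off the uniqueness from the identity of formal power series.

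First I would record the basic building block. For the free graded module $S(-a)$ (that is, $S$ shifted so that $1$ sits in degree $a$), the dimensions of its graded pieces are $h_j(S(-a))=\binom{n+j-a}{n}$ for $j\geq a$, so by \eqref{binomial}
\[
HS(S(-a)) \;=\; \frac{t^{a}}{(1-t)^{n+1}}.
\]
Next I would invoke Hilbert's syzygy theorem: since $S=\C[x_0,\dots,x_n]$ is a polynomial ring in $n+1$ variables, the finitely generated graded $S$-module $S(X)$ admits a finite graded free resolution
\[
0 \longrightarrow F_m \longrightarrow F_{m-1} \longrightarrow \cdots \longrightarrow F_1 \longrightarrow F_0 \longrightarrow S(X) \longrightarrow 0,
\qquad F_i \;=\; \bigoplus_{j} S(-a_{i,j}),
\]
with all maps preserving the grading and only finitely many shifts $a_{i,j}$.

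The key step is that the Hilbert series is additive on short exact sequences of finitely generated graded modules, hence alternating on any finite resolution. Splitting the resolution into short exact sequences and applying this repeatedly gives
\[
HS(X) \;=\; \sum_{i=0}^{m} (-1)^i \, HS(F_i) \;=\; \sum_{i=0}^{m} (-1)^i \sum_{j} \frac{t^{a_{i,j}}}{(1-t)^{n+1}} \;=\; \frac{\mathcal{K}_X(t)}{(1-t)^{n+1}},
\]
where I set
\[
\mathcal{K}_X(t) \;:=\; \sum_{i=0}^{m} (-1)^i \sum_{j} t^{a_{i,j}} \;\in\; \Z[t].
\]
This proves existence.

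For uniqueness, suppose $\mathcal{K}_X(t)$ and $\mathcal{K}'_X(t)$ are two polynomials with $HS(X)=\mathcal{K}_X(t)/(1-t)^{n+1}=\mathcal{K}'_X(t)/(1-t)^{n+1}$ as formal power series. Clearing the nonzero denominator in $\Z[\![t]\!]$ gives $\mathcal{K}_X(t)=\mathcal{K}'_X(t)$ as power series and hence as polynomials. The only subtle point in the whole argument is the appeal to Hilbert's syzygy theorem to guarantee \emph{finiteness} of the resolution; this is what forces $\mathcal{K}_X$ to be a polynomial rather than merely a power series, and it is the step I would expect to flag as the substantive input.
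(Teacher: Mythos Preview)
Your argument is correct and is the standard proof via Hilbert's syzygy theorem. The paper does not actually supply a proof of this statement; it simply cites \cite{miller2004combinatorial} for it, and later (in Example~\ref{2lines}) remarks that computing $[\mathcal{O}_X]$ ``is usually done using the Hilbert syzygy theorem by calculating a resolution,'' which is exactly the mechanism you wrote out.
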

Now we can state the proposed connection of the sheaf $K$-class with the Hilbert polynomial (see e.g. \cite[\S 21]{dugger2014geometric}):
\begin{proposition}\label{oandk}
$[\mathcal{O}_X]=\mathcal{K}_X(t)$ in $K(\P^n)=\Z[t]/((1-t)^{n+1})$.
\end{proposition}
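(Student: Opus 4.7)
The plan is to use a graded free resolution of $S(X)$ as an $S$-module and exploit the fact that such a resolution does double duty: its alternating sum of graded Betti numbers computes the numerator $\mathcal{K}_X(t)$ of the Hilbert series, and its sheafification computes the class $[\mathcal{O}_X]$ in $K(\P^n)$.

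First I would invoke Hilbert's syzygy theorem to obtain a finite graded free resolution
\[ 0 \to F_m \to \cdots \to F_1 \to F_0 \to S(X) \to 0, \qquad F_i = \bigoplus_{j} S(-a_{ij}). \]
Since the Hilbert series is additive on short exact sequences and $HS(S(-a)) = t^a / (1-t)^{n+1}$ (by the shifted version of \eqref{binomial}), splicing the resolution into short exact sequences gives
\[ HS(X) \;=\; \frac{\sum_i (-1)^i \sum_j t^{a_{ij}}}{(1-t)^{n+1}}, \]
so by the uniqueness part of Theorem \ref{kpoly} the numerator is $\mathcal{K}_X(t) = \sum_i (-1)^i \sum_j t^{a_{ij}}$.

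Next I would sheafify. The functor $M \mapsto \widetilde{M}$ from finitely generated graded $S$-modules to coherent sheaves on $\P^n$ is exact, and sends $S(-a)$ to the line bundle $\mathcal{O}(-a)$ and $S(X)$ to $\mathcal{O}_X$. Applying it to the free resolution produces a finite resolution of $\mathcal{O}_X$ by direct sums of line bundles $\mathcal{O}(-a_{ij})$. Working in $K(\P^n)$ and using $[\mathcal{O}(-1)] = t$, which gives $[\mathcal{O}(-a)] = t^a$, the alternating sum of the terms of the resolution yields
\[ [\mathcal{O}_X] \;=\; \sum_i (-1)^i \sum_j t^{a_{ij}} \;=\; \mathcal{K}_X(t) \]
in $K(\P^n) = \Z[t]/((1-t)^{n+1})$, as desired.

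The one place that needs a little care, rather than being a serious obstacle, is the double bookkeeping of the grading shift: the convention $t = [\gamma] = [\mathcal{O}(-1)]$ must be matched with the convention that $S(-a)$ places its generator in degree $a$, so that both $HS(S(-a))$ contributes a factor $t^a$ and $\widetilde{S(-a)} = \mathcal{O}(-a)$ contributes the class $t^a$. Once these conventions line up, the two computations of the alternating sum of graded Betti numbers are literally the same polynomial, and the proposition follows.
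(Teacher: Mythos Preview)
Your proof is correct and follows the standard resolution argument. The paper itself does not supply a proof, only citing \cite[\S 21]{dugger2014geometric}, but the surrounding text (the Koszul remark preceding Corollary \ref{CI} and the explicit Hilbert syzygy computation in Example \ref{2lines}) makes clear that this graded-free-resolution approach is exactly what the author has in mind.
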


Notice that adding a multiple of $(1-t)^{n+1}$ to $\mathcal{K}_X(t)$ changes only finitely many $h_j$'s, so the Hilbert polynomial doesn't change. $\mathcal{K}_X(t)$ encodes the Hilbert function and  $[\mathcal{O}_X]$ encodes the Hilbert polynomial. In this sense the natural generalization of the Hilbert polynomial for $X\subset M$ is the $K$-class $[\mathcal{O}_X]\in K(M)$.

\begin{example} It is not difficult to calculate (see \cite{harris2013algebraic}), that for three generic points $X\subset \P^2$ we have $h_X(j)=3$ for all $j>0$ (notice that $h_X(0)=1$ for all nonempty $X$!), and for three collinear points $Y\subset \P^2$ we have $\ h_Y(1)=2$ and $h_Y(j)=3$ for all $j>1$. Therefore
\[HS(X)=\left(3\sum t^j\right)-2=\frac{3}{1-t}-2=\frac{3(1-t)^2-2(1-t)^3}{(1-t)^3}\]
and
\[ HS(Y)=HS(X)-t=\frac{3(1-t)^2-(t+2)(1-t)^3}{(1-t)^3},\]
so their  Hilbert polynomial i.e. their sheaf $K$-class  agrees, but their $\mathcal{K}$-polynomial is different. We will see later that the $\mathcal{K}$-polynomial can be interpreted as the $\GL(1)$-equivariant sheaf $K$-class of the cone.
\end{example}
 Using standard resolution techniques (Koszul complex) one can show that
 \begin{corollary} \label{CI} Let $X=(f_1,\dots,f_k)\subset\P^n$ be a complete intersection. Then the sheaf-theoretic $K$-class
\[[\mathcal{O}_X]=\prod_{i=1}^{k}(1-t^{d_i}),\]
where $d_i$ is the degree of the generator $f_i$.
\end{corollary}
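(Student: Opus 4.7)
The plan is to produce a locally free resolution of $\mathcal{O}_X$ and read off the sheaf $K$-class as the alternating sum of its terms, exactly as the inverse to the duality isomorphism $K_0 \cong K^0$ was described earlier in the excerpt. First I would record the case $k=1$: if $f \in H^0(\mathbb{P}^n, \mathcal{O}(d))$ is a nonzero section, then multiplication by $f$ gives a short exact sequence
\[ 0 \to \mathcal{O}(-d) \xrightarrow{\cdot f} \mathcal{O} \to \mathcal{O}_{V(f)} \to 0, \]
so in $K(\mathbb{P}^n)$ one has $[\mathcal{O}_{V(f)}] = 1 - [\mathcal{O}(-d)] = 1 - t^d$, using $[\mathcal{O}(-1)] = t$ from the subsection on $K(\mathbb{P}^n)$.

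For general $k$, I would package the defining equations into a section of a vector bundle. Set $E := \bigoplus_{i=1}^k \mathcal{O}(d_i)$ and let $s = (f_1,\dots,f_k)$ be the tautological section of $E$, whose zero scheme is $X$. The Koszul complex on $s$,
\[ 0 \to \textstyle\bigwedge^k E^* \to \cdots \to \bigwedge^2 E^* \to E^* \xrightarrow{s^\vee} \mathcal{O}_{\mathbb{P}^n} \to \mathcal{O}_X \to 0, \]
is the candidate locally free resolution. Taking alternating sums in $K(\mathbb{P}^n)$ and using the splitting principle to compute exterior powers of the direct sum $E^* = \bigoplus \mathcal{O}(-d_i)$, one gets
\[ [\mathcal{O}_X] = \sum_{j=0}^{k} (-1)^j \bigl[\textstyle\bigwedge^j E^*\bigr] = \prod_{i=1}^{k}\bigl(1 - [\mathcal{O}(-d_i)]\bigr) = \prod_{i=1}^{k}(1 - t^{d_i}), \]
which is the claimed formula.

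The only genuine content is proving that the Koszul complex is actually exact, i.e.\ is a resolution of $\mathcal{O}_X$ and not just a complex whose Euler characteristic is right. This is where the complete intersection hypothesis enters: $X$ being a complete intersection in $\mathbb{P}^n$ means $\operatorname{codim} X = k$, and on a Cohen--Macaulay (in particular smooth) ambient variety this forces the local defining equations $f_1,\dots,f_k$ to form a regular sequence at every point of $X$. The standard lemma (see e.g.\ Eisenbud, \emph{Commutative Algebra}, Ch.~17) then says the Koszul complex on a regular sequence is exact, and away from $X$ some $f_i$ is a unit, so the complex is exact there too. This regularity step is the main obstacle; once it is in hand, the rest is a formal computation in $K(\mathbb{P}^n)$ combined with the observation that the inverse of the duality map sends $[\mathcal{O}_X]$ to the alternating sum of the terms of any finite locally free resolution.
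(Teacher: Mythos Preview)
Your proof is correct and is exactly the approach the paper indicates: the text simply says ``using standard resolution techniques (Koszul complex) one can show that'' and states the formula, so your argument is a fleshed-out version of the intended proof rather than a different route.
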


\begin{remark} Theorem \ref{kpoly} and Proposition \ref{oandk} implies that the assignment $X\mapsto p_X(m)$ induces a map  of $K(\P^n)$ to $\Q(m)/(m^{n+1})$, the truncated ring of the possible Hilbert polynomials, but this is only an additive homomorphism. According to \eqref{binomial} the base change is given by
\[H^n\mapsto 1,\ H^{n-1}\mapsto m+1,\ H^{n-2}\mapsto \frac12m^2+\frac32m+1,\ H^{n-3}\mapsto \frac16m^3+m^2+\frac{11}{6}m+1,\]
etc.
\end{remark}
The dimension and the degree of $X$ can be read off from the Hilbert polynomial. In fact some people use this fact to define the dimension and the degree. Translating this connection to the $H$-variable we get
\begin{corollary} \label{deg-dim} The sheaf $K$-class of $X\subset \P^n$ has the form $[\mathcal{O}_X]=\sum_{i=d}^{n}q_iH^i$, where $d$ is the codimension of $X$ and $q_d=\deg(X)$.
\end{corollary}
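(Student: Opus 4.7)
\medskip\noindent\textbf{Proof plan.} The strategy is to translate between the $H$-basis of $K(\P^n)=\Z[H]/(H^{n+1})$ and the $m$-basis of Hilbert polynomials, then read off the claim from the standard dimension/degree interpretation of the Hilbert polynomial.

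First I would combine Proposition~\ref{oandk} with the change of variables $H=1-t$ to write
\[
[\mathcal{O}_X] \;=\; \mathcal{K}_X(t) \;\equiv\; \sum_{i=0}^{n} q_i H^i \pmod{H^{n+1}},
\]
for unique integers $q_0,\dots,q_n$. To relate this to the Hilbert polynomial I plug this expression into $HS(X)=\mathcal{K}_X(t)/(1-t)^{n+1}$ to get
\[
HS(X) \;=\; \sum_{i=0}^{n} \frac{q_i}{(1-t)^{n+1-i}} \;+\; (\text{polynomial in } t),
\]
the polynomial correction coming from terms of $\mathcal{K}_X(t)$ of degree $>n$, which only perturb finitely many $h_j(X)$ and therefore do not influence $p_X$. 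Expanding each summand via \eqref{binomial} gives, for $j\gg 0$,
\[
p_X(j) \;=\; \sum_{i=0}^{n} q_i \binom{n-i+j}{\,n-i\,}.
\]
This is exactly the base change recorded in the Remark: $H^i$ corresponds to the polynomial $\binom{n-i+m}{n-i}$ in $m$, a polynomial of degree $n-i$ with leading coefficient $1/(n-i)!$.

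Now I invoke two classical facts about the Hilbert polynomial (which in fact some authors take as definitions): $\deg p_X = \dim X = n-d$, and the leading coefficient of $p_X$ equals $\deg(X)/(n-d)!$. Comparing degrees in the displayed formula for $p_X(j)$, the summand indexed by $i$ contributes a polynomial of degree exactly $n-i$, and the contributions of different $i$ cannot cancel in the top degree. Hence $q_i=0$ for every $i<d$, which yields $[\mathcal O_X]=\sum_{i=d}^n q_iH^i$. Matching leading coefficients then gives $q_d\cdot\tfrac{1}{(n-d)!}=\tfrac{\deg(X)}{(n-d)!}$, i.e.\ $q_d=\deg(X)$.

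The only mildly delicate point is the bookkeeping for the polynomial correction coming from terms of $\mathcal{K}_X(t)$ of degree exceeding $n$; once one observes that these affect only finitely many Hilbert function values and are therefore invisible to $p_X$, the rest is just a degree-and-leading-coefficient comparison in $\Q[m]$.
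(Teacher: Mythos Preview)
Your proof is correct and follows exactly the approach the paper indicates: the paper merely says that the dimension and degree can be read off from the Hilbert polynomial and that translating this to the $H$-variable gives the corollary, and you have carried out precisely that translation in detail. Your write-up is a faithful expansion of the paper's one-line justification.
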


\begin{remark}
  If  the ideal of $X\subset \P^n$ is known, then the 'hilbert$\_$numerator' command of Sage (Singular) calculates $[\mathcal{O}_X]$. Also Maple has the 'HilbertSeries' which has to be multiplied by the denominator $(1-t)^{n+1}$. These calculations are feasible only for small examples.
\end{remark}

\subsection{The pushforward $K$-class} This is also only defined for closed subvarieties: Take a resolution $\varphi:\tilde X\to X\subset Y$, where $Y$ is smooth.
\[ [X]:=\varphi_*[\mathcal{O}_{\tilde X}].   \]
It is a non-trivial fact that this class is independent of the resolution. By Theorem \ref{top=alg-push} we have
\begin{proposition}
  \[ [X]=\varphi_!1,\]
for the $K$-theory pushforward.
\end{proposition}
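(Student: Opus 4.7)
The proposition really is a matter of unwinding the identifications set up in the preceding subsections, so the plan is short.

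First I would note that $\tilde X$ is smooth, so the duality homomorphism
\[
K^0(\tilde X) \to K_0(\tilde X), \qquad [E]\mapsto [\mathcal{E}]
\]
(sending a vector bundle to its sheaf of sections) is an isomorphism. The unit $1 \in K^0(\tilde X)$ is the class of the trivial line bundle $\underline{\mathbb{C}}$, whose sheaf of sections is the structure sheaf $\mathcal{O}_{\tilde X}$. Hence, under the identification $K(\tilde X) = K^0(\tilde X) = K_0(\tilde X)$ used throughout the paper, one has $1 = [\mathcal{O}_{\tilde X}]$.

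Next I would apply Theorem \ref{top=alg-push} (or rather the convention introduced right after it, that under the identification of algebraic and topological $K$-theory the pushforward is denoted $f_!$). Applied to the proper morphism $\varphi\colon \tilde X \to Y$ this gives
\[
\varphi_! 1 \;=\; \varphi_!\, [\mathcal{O}_{\tilde X}] \;=\; \varphi_* [\mathcal{O}_{\tilde X}] \;=\; \sum_{i\geq 0} (-1)^i [R^i\varphi_* \mathcal{O}_{\tilde X}],
\]
where the middle equality is just the statement that the forgetful map respects pushforwards, and the last equality is the definition of $\varphi_*$ on $K_0$ recalled in the algebraic $K$-theory subsection.

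By definition $[X] := \varphi_*[\mathcal{O}_{\tilde X}]$, so combining these gives $[X] = \varphi_! 1$. The only non-formal ingredient is the independence of $[X]$ from the choice of resolution (remarked to be non-trivial); but that was asserted alongside the definition of $[X]$ and is not part of what is being claimed here. Thus nothing beyond the identification $1 = [\mathcal{O}_{\tilde X}]$ on a smooth variety and the compatibility of algebraic and topological pushforwards is required.
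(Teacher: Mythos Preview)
Your argument is correct and matches the paper's approach: the paper simply invokes Theorem~\ref{top=alg-push} (compatibility of algebraic and topological pushforward) together with the identification $1=[\mathcal{O}_{\tilde X}]$ on the smooth $\tilde X$, which is exactly what you unwind in detail.
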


For singular $X$ this is not easy to calculate. Even if we know a resolution, the $K$-group of $\tilde X$ can be complicated. However for $X=\bigcup_{i=1}^k X_i$ where $X_i$ are the irreducible components of $X$, we clearly have
 \begin{equation}\label{add}
 [X]=\sum _{i=1}^k [X_i],
 \end{equation}
 which helps if the components are smooth. Almost by definition we have
 \begin{theorem} If $X$ is irreducible with only rational singularities, then
 \[ [\mathcal{O}_X]=[X].\]
 \end{theorem}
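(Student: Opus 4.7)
The plan is to unpack both sides using the definitions and observe that the rational singularities hypothesis eliminates every higher direct image term. Let $\varphi: \tilde X \to X$ be a resolution of singularities witnessing that $X$ has rational singularities, i.e.\ $\varphi_* \mathcal{O}_{\tilde X} = \mathcal{O}_X$ and $R^i \varphi_* \mathcal{O}_{\tilde X} = 0$ for all $i>0$. Write $j: X \hookrightarrow Y$ for the inclusion into the ambient smooth variety, and set $\Phi := j \circ \varphi: \tilde X \to Y$, which is the map appearing in the definition of $[X]$.

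First I would compute $\varphi_*[\mathcal{O}_{\tilde X}]$ in $K_0(X)$. By the very definition of the K-theoretic pushforward recalled in \S2.1,
\[
\varphi_*[\mathcal{O}_{\tilde X}] \;=\; \sum_{i \geq 0} (-1)^i [R^i \varphi_* \mathcal{O}_{\tilde X}] \;=\; [\mathcal{O}_X] \quad \in K_0(X),
\]
where the second equality is exactly the rational singularities condition. Next, using functoriality of pushforward under the composition $\Phi = j \circ \varphi$, I would conclude
\[
[X] \;=\; \Phi_*[\mathcal{O}_{\tilde X}] \;=\; j_* \varphi_*[\mathcal{O}_{\tilde X}] \;=\; j_*[\mathcal{O}_X] \;=\; [\mathcal{O}_X] \quad \in K_0(Y),
\]
the last equality being just the fact that the sheaf K-class $[\mathcal{O}_X] \in K_0(Y)$ is by definition the class of $\mathcal{O}_X$ viewed as a coherent sheaf on $Y$ via $j_*$.

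There is essentially no obstacle: as the author comments, the statement is almost tautological once one writes out the definitions. The only thing I would be careful about is that $[X]$ is well-defined independently of the chosen resolution, but that is exactly the nontrivial fact invoked just above the theorem in the text and may be cited. Thus the sole content is recognizing that rational singularities is precisely the condition that cancels the higher $R^i\varphi_*$ terms in the expansion of $\varphi_*[\mathcal{O}_{\tilde X}]$.
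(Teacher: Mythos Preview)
Your proof is correct and follows essentially the same line as the paper's: both invoke the vanishing $R^i\varphi_*\mathcal{O}_{\tilde X}=0$ for $i>0$ and the identification $\varphi_*\mathcal{O}_{\tilde X}=\mathcal{O}_X$ (the paper attributes the latter to normality, which is implied by rational singularities), making the statement immediate from the definitions. The paper's argument is just a one-sentence version of what you wrote.
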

 Indeed, in this case $R^if_*\mathcal{O}_{\tilde X}=0$ for $i>0$, and $R^0f_*\mathcal{O}_{\tilde X}=\mathcal{O}_X$ always holds for $X$ normal.

 \subsection{Motivic invariants and the motivic $K$-class}
The motivic $K$-class is defined as the constant term of the motivic Chern class:
\begin{definition}\cite{brasselet2010hirzebruch}  Suppose that $X\subset Y$, with $Y$ smooth, then the motivic $K$-class of $X$ is
  \[ \mC_0(X):=\mC(X)_{y=0},\]
  where the ambient manifold $Y$ is not denoted if it is clear from the context.
\end{definition}

A simple consequence of the definition is that $\mC_0(X)=[X]$ for smooth $X$.

Let us recall what motivic invariants are.
The surprising (and not so easy to prove) fact (Fulton) is that $\chi(W)=\chi(W\setminus U)+\chi(U)$ for any $U\subset W$ for (quasiprojective) varieties over $\C$. Over the reals this is not true: e.g. $W=\R$, $U=\{0\}$. We will call these type of invariants \emph{motivic}.

Main examples are the  Chern-Schwartz-MacPherson (CSM) and the motivic Chern class. There are several variations, but I prefer now the following setup: Let $h^*$ be a complex oriented cohomology theory (ordinary cohomology for CSM and $K$-theory for motivic Chern class). Then a motivic class for $h^*$ is a functor
\[m(U\subset M)\in h^*(M)\]
( or in $h^*(M)[y]$ for the motivic Chern class) for pairs of varieties (note that $U$ in not necessarily closed in $M$, it is a constructible subset) if it has the \emph{motivic property}: $m(W)=m(W\setminus U)+m(U)$ for any $U\subset W$, and a property I will call \emph{homology property}: Suppose that $f: M\to N$ is proper and $f$ is an isomorphism restricted to $U\subset M$, then
\begin{equation}\label{homology-property}\tag{$!$}
f_!m(U\subset M)=m(f(U)\subset N).
\end{equation}
This property means that $m(U\subset M)$ essentially depends only on $U$ and the dependence on the embedding is very simple. For example the fundamental cohomology class of $U$ has this property, and the reason for that is that there is a fundamental homology class as well. We restrict ourselves to $M$ smooth though it is not necessary. This property is also called covariant functoriality.

We claim that any motivic class in the above sense is determined by its value on closed submanifolds. Indeed, for any pair $U\subset M$ with $U,M$ smooth but $U$ not necessarily closed in $M$ there is a proper map $f:\tilde M\to M$ such that
\begin{enumerate}
  \item $f|_{f^{-1}(U)}: f^{-1}(U)\to U$ is an isomorphism,
  \item The \idez{divisor} $D:=\tilde M\setminus f^{-1}(U)$ is the union of closed submanifolds $D_i,\ i=1,\dots,s$ such that for all $I\subset\underline{s}=\{1,2,\dots,s\}$ the intersection $D_I:=\bigcap_{i\in I}D_i$ is a submanifold.
\end{enumerate}
By the weak factorization theorem such a map (called proper normal crossing extension) always exists. Then, by the motivic and the homology property we have:
\begin{equation} \label{pnc}
m(U\subset M)=\sum_{I\subset \underline{s}}\;(-1)^{|I|} f^{I}_!m(D_I)\,,
\end{equation}
where $f^{I}=f|_{D_I}$, and we use the convention that $m(M):=m(M\subset M)$.

For a smooth variety $M$  we define $\mc(M):=\lambda_y(T^*M)$, where for any complex vector bundle $\lambda_y(E)=\sum_{i=0}^{\rank E}[\Lambda^iE]y^i$. Notice that $\lambda_y(E)$ is a natural analogue of the total Chern class of cohomology theory. The existence of the motivic Chern class is proved in \cite{brasselet2010hirzebruch}. The definition of the equivariant version is again straightforward, see  in \cite{feher2018motivic}.

For $m=\mC_0$ we define $\mC_0(M)=1$ for smooth $M$. The existence of the motivic class $\mC_0$ follows from the existence of $\mc$, since $\mC_0(X)=\mc(X)_{y=0}$.
\begin{example}
\begin{table}[tbp]
\caption{$K$-classes of plane cubics }
\label{cubic-table}
\begin{tabular}{|p{4.8cm}|c|p{2.7cm}|c|c|c|}
\hline
 \makebox[0pt]{$\phantom{\Big\|}$}{\bf description of  $X$} &{\bf shape}& {\bf representant} & $[\mathcal{O}_X]$ & $[X]$ & $\mC_0(X)$ \\[2mm]
\hline
 nodal cubic & \raisebox{-1ex}{\node} & $\phantom{\Big\|}\!\!x^3+y^3+xyz$              & $3H-3H^2\ \ $ & $3H-2H^2\ \ $& $3H-3H^2\ $\\[2mm]
\hline
 cuspidal cubic & \raisebox{-1ex}{\cusp} & $\phantom{\Big\|}\!\!x^3+y^2z$              & $3H-3H^2\ \ $ & $3H-2H^2\ \ $& $3H-2H^2\ $\\[2mm]
\hline
conic and intersecting line & \raisebox{-1ex}\omeg & $\phantom{\Big\|}\!\!x^3+xyz$   & $3H-3H^2\ \ $ & $3H-H^2\ \ $& $3H-3H^2\ $\\[2mm]
\hline
conic and tangent line & \raisebox{-1ex}\tangent & $\phantom{\Big\|}\!\!x^2y+y^2z$   & $3H-3H^2\ \ $ & $3H-H^2\ \ $& $3H-2H^2\ $\\
\hline
 three nonconcurrent lines & \raisebox{-1ex}\tri & $\phantom{\Big\|}\!\!xyz$         & $3H-3H^2\ \ $ & $3H\ \ $& $3H-3H^2\ $\\
\hline
 three concurrent lines & \raisebox{-1ex}\concurrent & $\phantom{\Big\|}\!\!x^2y+xy^2$ & $3H-3H^2\ \ $ & $3H\ \ $& $3H-2H^2\ $\\
\hline
\end{tabular}
\end{table}

Let $X$ be a projective cubic plane curve. Then its sheaf-class is $1-t^3=3H-3H^2$. For the smooth cubics all three classes agree. For the others see Table \ref{cubic-table}. The calculations are straightforward for the reducible ones. For the nodal and cuspical ones we use the fact that they have a rational resolution $\varphi:\P^1\to \P^2$ of degree 3. First we need to calculate $\varphi_!1$. Since $\varphi_!1$ depends only on the degree, we have
\[\varphi_!1=i_!f_!1,\]
where $i:\P^1\to \P^2$ is the linear inclusion and $f:\P^1\to \P^1$ has degree 3. Since $i$ is injective, it is easy to see that $i_!1=H$ and $i_!H=H^2$. Less obvious is to calculate $f_!1$:
 \begin{lemma} \label{deg-d-push} Let $f_d:\P^1\to \P^1$ has degree $d$. Then
     \[ f_{d!}1=d-(d-1)H.\]
 \end{lemma}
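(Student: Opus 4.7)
The plan is to determine the two unknown coefficients of $f_{d!}1$ in the basis $\{1,H\}$ of $K(\P^1)=\Z[H]/(H^2)$ by pushing forward to a point against two well-chosen test classes. Write
\[ f_{d!}1 = a + bH \]
for integers $a,b$; it suffices to produce two independent linear equations in $a,b$.

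First I would normalize the class $[\mathcal{O}(1)]$ in $K(\P^1)$. Using the paper's conventions ($t=[\mathcal O(-1)]$, $H=1-t$), one has $[\mathcal{O}(-1)]=1-H$, and since $H^2=0$ the identity $(1-H)(1+H)=1$ gives $[\mathcal{O}(1)]=1+H$. I also note the two elementary pushforward values along $\pi\colon\P^1\to \mathrm{pt}$: $\pi_!1=\chi(\mathcal O_{\P^1})=1$ and $\pi_!H=1$ (the class of a point).

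For the first equation, compose with $\pi$: by functoriality of pushforward,
\[ \pi_! f_{d!}1 = (\pi\circ f_d)_!1 = \chi(\P^1,\mathcal O_{\P^1}) = 1, \]
which gives $a+b=1$. For the second equation, I would multiply $f_{d!}1$ by $[\mathcal O(1)]$ and apply the projection formula. Since $f_d$ has degree $d$, $f_d^*\mathcal O(1)=\mathcal O(d)$, so
\[ \pi_!\bigl(f_{d!}1\cdot[\mathcal O(1)]\bigr) = \pi_! f_{d!}\bigl(f_d^*[\mathcal O(1)]\bigr) = \chi(\P^1,\mathcal O(d)) = d+1. \]
Expanding the left-hand side in the $\{1,H\}$ basis, $(a+bH)(1+H)=a+(a+b)H$, whose image under $\pi_!$ is $2a+b$. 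Hence $2a+b=d+1$.

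Solving the two equations yields $a=d$ and $b=1-d$, which is the claimed formula. The main non-obstacle-yet-key point is picking a test class that separates the two coefficients cleanly; $[\mathcal O(1)]$ is the natural choice because the projection formula turns the computation into the elementary Euler characteristic $\chi(\P^1,\mathcal O(d))=d+1$, via the obvious identity $f_d^*\mathcal O(1)=\mathcal O(d)$.
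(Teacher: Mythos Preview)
Your argument is correct. Both linear equations are justified: functoriality of pushforward gives $a+b=1$, and the projection formula together with $f_d^*\mathcal O(1)\cong\mathcal O(d)$ and $\chi(\P^1,\mathcal O(d))=d+1$ gives $2a+b=d+1$.

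The paper takes a different route. It factors $f_d$ through $\P^d$: the composite $i_d\circ f_d$ of $f_d$ with the linear embedding $i_d:\P^1\hookrightarrow\P^d$ is homotopic to the Veronese embedding $v_d:\P^1\hookrightarrow\P^d$, whose image is the rational normal curve $C_d$. Since $v_d$ is an embedding, $v_{d!}1=[C_d\subset\P^d]$, and this class is computed separately (via the Hilbert polynomial, or via the Todd genus argument of Corollary~\ref{rncurve}) to be $dH^{d-1}-(d-1)H^d$. Knowing $i_{d!}H^j=H^{j+d-1}$ then forces $f_{d!}1=d-(d-1)H$. Your approach stays entirely inside $K(\P^1)$ and is more self-contained: it replaces the geometric factorization with a direct appeal to the projection formula and the elementary value $\chi(\P^1,\mathcal O(d))$. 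The paper's approach, on the other hand, makes the link to the rational normal curve explicit, which is thematically consonant with the surrounding discussion of $K$-classes via Hilbert polynomials. Both arguments ultimately rest on $\Td(\P^1)=1$.
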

  \begin{proof} Let $v_d:\P^1\to \P^d$ be the degree $d$ Veronese embedding. The image $C_d$ is smooth, so $v_{d!}1=[C_d\subset \P^d]$. Using the Hilbert polynomial or Corollary \ref{rncurve} we can see, that $[C_d\subset \P^d]=H^{d-1}\big(d-(d-1)H\big)$. Let $i_d: \P^1\to \P^d$ be the linear embedding. Then $i_df_d$ is homotopic to $v_d$, so $v_{d!}1=i_{d!}f_{d!}1$. On the other hand $i_{d!}H^j=H^{j+d-1}$, implying our result.
  \end{proof}

 Therefore $\varphi_!1=i_!f_!1=i_!(3-2H)=3H-2H^2=[\raisebox{-.3ex}\node]=[\raisebox{-.4ex}\cusp]$. Using the definition \eqref{pnc} of the motivic class (notice that these resolutions are evidently normal crossing) we get that $\mC_0(\raisebox{-.4ex}\cusp)=\varphi_!1$ since $\varphi$ injective in this case, and $\mC_0(\raisebox{-.3ex}\node)=\varphi_!1-H^2$ since $\varphi$ has a double point in this case.
\end{example}

\begin{theorem}\cite{brasselet2010hirzebruch} If $X$ has only Du Bois singularities then $[\mathcal{O}_X]=\mC_0(X)$. \end{theorem}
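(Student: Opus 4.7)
The plan is to express both $\mC_0(X)$ and $[\mathcal O_X]$ as the same alternating sum over a hyperresolution of $X$ with smooth pieces, using the \eqref{pnc}-type expansion on the motivic side and the defining property of Du Bois singularities on the sheaf side.

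First I would take a strong resolution $\pi\colon\tilde X\to X$ such that $D:=\pi^{-1}(Z)=\bigcup_{i=1}^{s}D_i$ is a simple normal crossing divisor, where $Z$ is a closed subvariety containing $X_{\mathrm{sing}}$ and $\pi$ is an isomorphism over $U:=X\setminus Z$. Applying \eqref{pnc} to $\mathrm{id}\colon\tilde X\to\tilde X$, viewed as a proper normal crossing extension of $\tilde U:=\tilde X\setminus D$, and using $\mC_0(D_I)=[\mathcal O_{D_I}]$ for the smooth strata $D_I$, gives
\[
\mC_0(\tilde U\subset\tilde X)\;=\;\sum_{I\subset\underline{s}}(-1)^{|I|}[\mathcal O_{D_I}]\;=\;[\mathcal O_{\tilde X}]-[\mathcal O_D]
\]
(with the convention $D_\emptyset=\tilde X$; the second equality is the standard inclusion--exclusion for the structure sheaf of a normal crossing divisor). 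Pushing forward by $\pi$ and invoking the homology property \eqref{homology-property},
\[
\mC_0(U\subset X)\;=\;\pi_![\mathcal O_{\tilde X}]-(\pi|_D)_![\mathcal O_D],
\]
so by the motivic property $\mC_0(X)=\pi_![\mathcal O_{\tilde X}]-(\pi|_D)_![\mathcal O_D]+\mC_0(Z)$.

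On the sheaf side, the Du Bois property applied to the square $(\tilde X,D,Z,X)$ says that $\mathcal O_X$ is quasi-isomorphic to the homotopy limit of $R\pi_*\mathcal O_{\tilde X}\to R(\pi|_D)_*\mathcal O_D\leftarrow\mathcal O_Z$, which in $K(Y)$ reads
\[
[\mathcal O_X]\;=\;\pi_![\mathcal O_{\tilde X}]-(\pi|_D)_![\mathcal O_D]+[\mathcal O_Z].
\]
Comparing the two formulas reduces the theorem to $\mC_0(Z)=[\mathcal O_Z]$, to be handled by induction on $\dim X$ (base case: smooth $X$, where the statement is tautological).

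The main obstacle is the inductive step, since the singular locus of a Du Bois variety need not itself be Du Bois: the naive choice $Z=X_{\mathrm{sing}}$ may fail. The standard remedy is to replace the single square above by a full Guill\'en--Navarro cubical hyperresolution $\epsilon\colon X_\bullet\to X$ in which every vertex is smooth; then iterated application of the motivic and homology properties on one side, and the defining quasi-isomorphism of Du Bois singularities on the other, express both $\mC_0(X)$ and $[\mathcal O_X]$ as the same alternating sum $\sum_{\emptyset\neq I}(-1)^{|I|+1}(f_I)_![\mathcal O_{X_I}]$ of pushforwards of structure sheaves of smooth $X_I$, after which the equality is formal.
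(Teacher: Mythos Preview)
The paper does not give its own proof of this statement: it is quoted directly from \cite{brasselet2010hirzebruch}, and the surrounding text only adds that the definition of Du Bois singularities and further discussion can be found in \cite{brasselet2010hirzebruch} and \cite{Weber=EquivariantHirzebruch}. So there is nothing in the paper to compare your argument against.

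That said, your outline is essentially the argument of the cited reference. Both $\mC_0(X)$ and $[\mathcal O_X]$ are expressed as the same alternating sum of pushforwards of structure sheaves of the smooth components of a cubical hyperresolution: on the motivic side this is forced by the motivic and homology properties (your use of \eqref{pnc}), and on the sheaf side the Du Bois condition is precisely the statement that the natural map $\mathcal O_X\to\underline\Omega^0_X$ is a quasi-isomorphism, where $\underline\Omega^0_X$ is built from the same hyperresolution. One small correction: in your single-square step the object that appears naturally on the sheaf side is $\underline\Omega^0_Z$, not $\mathcal O_Z$; the identity $[\mathcal O_X]=\pi_![\mathcal O_{\tilde X}]-(\pi|_D)_![\mathcal O_D]+[\mathcal O_Z]$ already uses that $Z$ is Du Bois, so the reduction to $\mC_0(Z)=[\mathcal O_Z]$ is not an independent induction but exactly the hypothesis you cannot guarantee. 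You diagnose this correctly and give the right remedy---pass to a full smooth hyperresolution so that no singular $Z$ ever appears---which is what \cite{brasselet2010hirzebruch} does.
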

Also the definition of Du Bois singularity can be found in \cite{brasselet2010hirzebruch}. Additional information can be found in \cite{Weber=EquivariantHirzebruch}.
Important cases of Du Bois singularities are rational singularities, transversal union of smooth varieties (these are not rational) and cone hypersurfaces in $\C^n$ of degree $d \leq n$. We can see from our calculations that $\raisebox{-.3ex}\cusp$, $\raisebox{-.3ex}\tangent$ and $\raisebox{-.3ex}\concurrent$ are not Du Bois.

\begin{example} \label{2lines} Let $X=E^{n-k}\cup E^{n-l}\subset \P^n$, where $E^j$ is a $j$-dimensional projective subspace. We assume that $E^{n-k}$ and $ E^{n-l}$ are in general position and $k,l>0$. Then $X$ has only Du Bois singularities, therefore $[\mathcal{O}_X]=\mC_0(X)$. The latter can be easily calculated by the motivic property:
\[ \mC_0(X)=(1-t)^k+(1-t)^l-(1-t)^{k+l},\]
where the last term is 0 in the $K$-group if $k+l>n$, i.e. $E^{n-k}\cap E^{n-l}=\emptyset.$ On the other hand
\[  [E^{n-k}\cup E^{n-l}]=[E^{n-k}]+[E^{n-l}]=(1-t)^k+(1-t)^l.   \]
\bigskip

The calculation of $[\mathcal{O}_X]$ is usually done using the Hilbert syzygy theorem by calculating a resolution. For non obvious examples this is difficult, even with computers. Let us demonstrate this on $X=E^1\cup E^1\subset \P^3$. Then $I_X=(x,y)\cdot(w,z)=(xw,xz,yw,yz)$. The four relations are
\[\begin{array}{lcccc}
   g_1: & z & -w & 0 & 0 \\
   g_2: & 0 & 0 & z & -w \\
   g_3: & -y & 0 & x & 0 \\
    g_4: &0 & -y & 0 & x
  \end{array}\]
Finally we have a relation among the relations: $yg_1-xg_2+zg_3-wg_4$. This gives us
\[[\mathcal{O}_X]=1-4t^2+4t^3-t^4,\]
which is indeed congruent to $2(1-t)^2$ modulo $(1-t)^4$.


\end{example}
\subsection{The Todd genus} \label{sec:todd} We have already seen that understanding pushforward is essential in our calculations. Compared to cohomology, $K$-theory has a new feature. Let $\coll_X:X\to *$ denote the collapse map of $X$ for $X$ projective and smooth. Then the \emph{Todd genus}
\[\Td(X):={\coll_X}_!1=\int\limits_X1=\chi(X,\mathcal{O}_X)\in \Z\]
is a non-trivial invariant. (In this paper the integral sign will always denote the $K$-theory pushforward to the point.) This is a genus in the sense that it defines a ring homomorphism from the complex cobordism ring to the ring of integers.

As being a genus suggests it is enough to calculate $\td(\P^n)$ to be able to calculate more involved examples. It is a key result in $K$-theory, in particular the proof of Theorem \ref{top=alg-push} uses it. It is usually proved using the topological Grothendieck-Riemann-Roch theorem.
\begin{theorem}\cite{hirzebruch1962riemann} For any $n\in\N$ the Todd genus of the projective space $\P^n$ is 1:
  \[\td(\P^n)=1.\]
\end{theorem}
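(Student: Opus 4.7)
Since $\td(\P^n)=\chi(\P^n,\mathcal{O}_{\P^n})$ by definition, the plan is to compute the sheaf Euler characteristic by establishing $H^0(\P^n,\mathcal{O}_{\P^n})=\C$ and $H^i(\P^n,\mathcal{O}_{\P^n})=0$ for every $i\geq 1$, so that only the degree zero term survives in the alternating sum.

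The identification $H^0(\P^n,\mathcal{O})=\C$ is immediate: a global regular function on the projective variety $\P^n$ is constant. For the vanishing of the higher cohomology I would induct on $n$, the base case $\P^0=\mathrm{pt}$ being trivial. For the inductive step, fix a hyperplane $j:\P^{n-1}\hookrightarrow\P^n$ and use the ideal sheaf sequence
\[
0\to \mathcal{O}_{\P^n}(-1)\to \mathcal{O}_{\P^n}\to j_*\mathcal{O}_{\P^{n-1}}\to 0.
\]
The long exact sequence in cohomology, together with the inductive hypothesis $H^i(\P^{n-1},\mathcal{O})=0$ for $i\geq 1$, reduces the claim to the vanishing of all cohomology of the twisted sheaf $\mathcal{O}_{\P^n}(-1)$.

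The vanishing of $H^0(\P^n,\mathcal{O}(-1))$ is elementary (the sheaf has no nonzero global sections), and the top degree vanishing $H^n(\P^n,\mathcal{O}(-1))=0$ follows from Serre duality with $\omega_{\P^n}=\mathcal{O}(-n-1)$, since
\[
H^n(\P^n,\mathcal{O}(-1))\cong H^0(\P^n,\mathcal{O}(-n))^*=0
\]
for $n\geq 1$. The intermediate degrees are more delicate, and the cleanest organization is a simultaneous induction proving $H^i(\P^n,\mathcal{O}(d))=0$ for $i\geq 1$ and all $d\geq -n$ via the twisted hyperplane sequences $0\to\mathcal{O}(d-1)\to\mathcal{O}(d)\to j_*\mathcal{O}_{\P^{n-1}}(d)\to 0$.

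The main obstacle is the bookkeeping of this double induction. A shortcut is to invoke Serre vanishing for the ample bundle $\mathcal{O}(1)$, which gives $H^i(\P^n,\mathcal{O}(d))=0$ for $i\geq 1$ and $d\gg 0$, and then to descend in $d$ using the hyperplane sequences to cover the intermediate range, including $d=-1$. Once the vanishing is established, $\chi(\P^n,\mathcal{O})=\dim H^0(\P^n,\mathcal{O})=1$ and the theorem follows. As a consistency check, the same computation actually yields $\chi(\P^n,\mathcal{O}(d))=\binom{n+d}{n}$ for $d\geq 0$, recovering the Hilbert series formula \eqref{binomial} and specializing at $d=0$ to the desired value $1$.
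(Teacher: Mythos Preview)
The paper does not give its own proof; it cites the result from \cite{hirzebruch1962riemann} and remarks only that it ``is usually proved using the topological Grothendieck--Riemann--Roch theorem.'' Your route---computing $\chi(\P^n,\mathcal{O}_{\P^n})$ directly by establishing $H^i(\P^n,\mathcal{O})=0$ for $i\geq 1$ via the hyperplane sequence and the standard vanishing for $\mathcal{O}(d)$---is correct and far more elementary than invoking GRR. The double induction you outline is the usual one (essentially Hartshorne~III.5.1), and although you leave the intermediate degrees somewhat sketchy, filling them in is routine. What GRR buys is a uniform machine computing $\chi(X,E)$ in terms of characteristic classes for arbitrary $X$ and $E$; what your approach buys is directness and no dependence on heavy theorems. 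One caveat about the paper's internal logic: it states that the proof of Theorem~\ref{top=alg-push} \emph{uses} $\td(\P^n)=1$, so the identification $\td(X)=\chi(X,\mathcal{O}_X)$ you take as your starting point is, in the paper's order of dependencies, downstream of this theorem. Your computation is really of the \emph{algebraic} pushforward (which equals $\chi$ tautologically from the definition of $f_*$ on $K_0$), so this is not a gap in your argument, but it explains why the paper treats the result as a foundational input rather than deriving it as you do.
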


Then by basic properties of pushforward immediately yields:
\begin{corollary} \label{td-from-k} Let $X\subset \P^n$ be smooth with $K$-theory fundamental class
\[  [X]=\sum_{i=0}^{n}q_iH^i=q(H).\]
Then $\td(X)=\sum_{i=0}^{n}q_i=q(1)$.
\end{corollary}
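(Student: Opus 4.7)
The plan is to reduce the computation of $\td(X)$ to knowing $\td(\P^k)$ for linear subspaces, via functoriality of pushforward.

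First I would rewrite $\td(X)$ as a pushforward through $\P^n$. Since $X$ is smooth and closed in $\P^n$, Theorem \ref{top=alg-push} gives $[X] = [\mathcal{O}_X] = \iota_! 1$, where $\iota\colon X\hookrightarrow \P^n$ is the inclusion. Using the factorization $\coll_X = \coll_{\P^n}\circ \iota$ and functoriality of the $K$-theoretic pushforward, we get
\[ \td(X) = (\coll_X)_! 1 = (\coll_{\P^n})_! \iota_! 1 = (\coll_{\P^n})_![X] = \sum_{i=0}^n q_i\,(\coll_{\P^n})_! H^i. \]
So it suffices to compute $(\coll_{\P^n})_! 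H^i$ for each $i$.

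Next I would identify $H^i$ geometrically. By the discussion of $K(\P^n)$ in the excerpt, $H=1-t=[E^{n-1}\subset\P^n]$, and more generally (by applying the same transversality argument iteratively, or just multiplying in $K(\P^n)$) one has $H^i = [E^{n-i}\subset\P^n]$ for a linear subspace $E^{n-i}\cong\P^{n-i}$. Applying Theorem \ref{top=alg-push} once more, $H^i = (\iota_i)_! 1$, where $\iota_i\colon \P^{n-i}\hookrightarrow\P^n$ is the linear inclusion. Then again by functoriality,
\[ (\coll_{\P^n})_! H^i = (\coll_{\P^n})_!(\iota_i)_! 1 = (\coll_{\P^{n-i}})_! 1 = \td(\P^{n-i}) = 1, \]
where the last equality is Hirzebruch's theorem (invoked just above the statement). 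Substituting back gives $\td(X) = \sum_{i=0}^n q_i = q(1)$, as claimed.

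There is no real obstacle here: the argument is essentially a two-step unwinding of functoriality together with the input $\td(\P^k)=1$. The only thing to be a little careful about is the identification of $H^i$ with the class of a linear subspace $\P^{n-i}$, but this is immediate from the transversal-section description of Euler classes recalled in the excerpt (the hyperplane bundle $L=\mathcal{O}(1)$ admits a section cutting out $E^{n-1}$, and intersecting $i$ such transverse hyperplanes produces a copy of $\P^{n-i}$).
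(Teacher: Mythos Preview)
Your proof is correct and follows essentially the same approach as the paper: factor the pushforward through $\P^n$ so that $\td(X)=\int_{\P^n}[X]$, then use $\td(\P^k)=1$ to evaluate $\int_{\P^n}H^i=1$. The paper simply states the resulting integral formula $\int_{\P^n}\sum q_iH^i=q(1)$ without spelling out the identification $H^i=[\P^{n-i}]$ and the second application of functoriality, but that is exactly what you have unpacked.
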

\begin{proof} Notice first that  $\td(X)=\int\limits_{\P^n}[X]$, then we can apply the integral formula
\begin{equation}\label{integral}
  \int\limits_{\P^n}\sum_{i=0}^{n}q_iH^i=q(1).
\end{equation}
\end{proof}
Recalling the connection of $[X]=[\mathcal{O}_X]$ with the Hilbert polynomial $p_X$ we have that $q(1)=p_X(0)$. Recall that the \emph{arithmetic genus} is defined as
\[p_a(X):=(-1)^{\dim X}(p_X(0)-1),\]
so we see that it is essentially the same as the Todd genus for smooth $X$. In other words the Todd genus of $X$ is equal to its \emph{holomorphic Euler characteristics} $\chi(X,\mathcal{O}_X)$.
\subsection{Genus of smooth hypersurfaces}
For a smooth degree $d$ hypersurface $X_d\subset \P^n$ we have $[X]\equiv 1-t^d$, so a simple binomial identity implies that
\begin{corollary} \label{td-of-Zd}The arithmetic genus of the smooth degree $d$ hypersurface $X_d\subset \P^n$ is $\binom{d-1}{n}$.
\end{corollary}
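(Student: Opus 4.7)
The plan is to combine Corollary \ref{CI}, which gives $[\mathcal{O}_{X_d}] = 1 - t^d$ in $K(\P^n)$, with the evaluation formula of Corollary \ref{td-from-k}, and then invoke an elementary binomial identity. Since $X_d$ is smooth, Theorem \ref{top=alg-push} identifies $[\mathcal{O}_{X_d}]$ with the $K$-theory fundamental class $[X_d]$, so the class is $1 - t^d$ in $K(\P^n) = \Z[H]/(H^{n+1})$, where $H = 1 - t$.

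First I would rewrite the class in the $H$-variable by expanding $(1-H)^d$, obtaining
\[ [X_d] \equiv \sum_{k=1}^{n}(-1)^{k+1}\binom{d}{k}H^k \pmod{H^{n+1}}. \]
Corollary \ref{td-from-k} then gives
\[ \td(X_d) = 1 - \sum_{k=0}^{n}(-1)^k\binom{d}{k}, \]
where the case $d \leq n$ is automatically subsumed since $\binom{d}{k} = 0$ for $k > d$. The remaining combinatorial step is the classical identity
\[ \sum_{k=0}^{n}(-1)^k\binom{d}{k} = (-1)^n\binom{d-1}{n}, \]
which I would prove in one line by induction on $n$ using Pascal's rule $\binom{d}{k} = \binom{d-1}{k-1} + \binom{d-1}{k}$, so that the sum telescopes. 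This yields $\td(X_d) = 1 + (-1)^{n+1}\binom{d-1}{n}$, and since $\dim X_d = n-1$ and $p_a(X_d) = (-1)^{\dim X_d}(\td(X_d) - 1)$, a sign computation gives $p_a(X_d) = \binom{d-1}{n}$ as claimed.

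There is essentially no obstacle here: all of the geometric content has been absorbed into Corollaries \ref{CI} and \ref{td-from-k}, and what remains is a routine binomial manipulation. A useful sanity check is the low-degree range $d \leq n$, in which $\binom{d-1}{n} = 0$ and the alternating sum evaluates to zero as well, consistent with $\td(X_d) = \chi(X_d, \mathcal{O}_{X_d}) = 1$ (e.g.\ for lines in $\P^2$, conics in $\P^2$, or quadric surfaces in $\P^3$).
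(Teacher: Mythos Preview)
Your argument is correct and follows exactly the route the paper indicates: it derives $[X_d]\equiv 1-t^d$ from Corollary~\ref{CI}, applies Corollary~\ref{td-from-k}, and then invokes the binomial identity the paper only alludes to with the phrase ``a simple binomial identity.'' You have merely made explicit what the paper leaves implicit, including the final sign computation passing from $\td(X_d)$ to $p_a(X_d)$.
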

Notice that by definition $\binom{d-1}{n}=0$ for $d\leq n$. This is the first sign that hypersurfaces of degree higher that $n$ behaves very differently then the low degree ones.
\begin{corollary} \label{rncurve} For the degree $d$ rational normal curve $X_d\subset \P^d$ we have $[X_d\subset \P^d]=dH^{d-1}-(d-1)H^d$.
\end{corollary}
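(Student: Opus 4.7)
The plan is to pin down the two unknown coefficients by combining the codimension/degree information with the Todd genus of $\mathbb{P}^1$, both of which we already have access to via results in the excerpt.

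First I would invoke Corollary \ref{deg-dim}: since $X_d \subset \mathbb{P}^d$ has codimension $d-1$ and degree $d$, the sheaf $K$-class must have the form
\[
[\mathcal{O}_{X_d}] = d H^{d-1} + q_d H^d
\]
for some integer $q_d$, and all lower-degree terms vanish. Because $X_d$ is smooth, Theorem \ref{top=alg-push} identifies this sheaf class with $[X_d \subset \mathbb{P}^d]$, so only the single coefficient $q_d$ remains to be determined.

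Next I would compute $q_d$ by evaluating the Todd genus in two ways. On the one hand, Corollary \ref{td-from-k} gives
\[
\Td(X_d) = d + q_d.
\]
On the other hand, the rational normal curve $X_d$ is the image of the Veronese embedding $v_d : \mathbb{P}^1 \to \mathbb{P}^d$, which is an isomorphism onto its smooth image, so $X_d \cong \mathbb{P}^1$. Hence $\Td(X_d) = \Td(\mathbb{P}^1) = 1$ by the theorem attributed to Hirzebruch. Equating the two expressions gives $q_d = 1-d = -(d-1)$, which is exactly the claimed formula.

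There is no real obstacle here — the whole proof is essentially bookkeeping, assembled from Corollaries \ref{deg-dim} and \ref{td-from-k} together with $\Td(\mathbb{P}^1)=1$. The only point where one must be a little careful is checking that $X_d$ is genuinely smooth (so that Theorem \ref{top=alg-push} applies and $[\mathcal{O}_{X_d}] = [X_d\subset\mathbb{P}^d]$), but this is standard since $v_d$ is a closed embedding.
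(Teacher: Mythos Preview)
Your proof is correct and follows essentially the same route as the paper: invoke Corollary~\ref{deg-dim} to reduce to determining the single coefficient $q_d$, then use $X_d\cong\P^1$ and $\Td(\P^1)=1$ together with Corollary~\ref{td-from-k} to get $d+q_d=1$. The paper's argument is just a two-sentence version of what you wrote.
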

Indeed, by Corollary \ref{deg-dim}. we have $[X_d\subset \P^d]=dH^{d-1}+q_dH^d$. But $X_d\iso\P^1$ so $1=\td(X_d)=d+q_d$.\qed
\subsection{The $\chi_y$ genus}
A straightforward extension of the Todd genus is the $\chi_y$-genus of Hirzebruch:
\[\chi_y(X):=\int\limits_X\lambda_y(T^*X),\]
for $X$ projective and smooth. In general if $X \subset M$ for $M$ projective and smooth we can define
\begin{equation}
  \label{tdy-def}\chi_y(X):=\int\limits_M\mC(X\subset M),
\end{equation}
which is independent of the embedding of $X$ (the reader is encouraged to check this), providing a motivic extension of the  $\chi_y$ genus. Clearly, substituting $y=0$ into the $\chi_y$ genus we obtain the Todd genus if $X$ is smooth, and the holomorphic Euler characteristics is general.
\begin{example} \label{tdypn}  It is instructive to calculate $\chi_y(\P^n)$. Since we have the short exact sequence of vector bundles
\[ 0\to \C\to L^{n+1}\to T\P^n\to 0,\]
and $\lambda_y$ is multiplicative, we have
\[\mC(\P^n)=\frac{(1+yt)^{n+1}}{1+y}\equiv\frac{(1+yt)^{n+1}-(-y)^{n+1}(1-t)^{n+1}}{1+y}=\sum_{i=0}^{n}(1+yt)^{i}(y(t-1))^{n-i}.\]
Applying the integral formula \eqref{integral} by substituting $t=0$ we arrive at
\[ \chi_y(\P^n)=1-y+y^2-\dots\pm y^n,\]
which was already calculated by Hirzebruch.
\end{example}

\begin{remark}\label{mcpn}
  It is interesting to write $\mC(\P^n)$ in the form $\sum_{i=0}^{n}q_iH^i$ for $q_i\in \Z[y]$ (See e.g. in \cite[\S 22]{dugger2014geometric}):
  \[ \mC(\P^n)=\sum_{i=0}^{n}\binom{n+1}{i}(-y)^i(1+y)^{n-i}H^i.\]
\end{remark}
For the next example we recall the \quot{divisor trick}, the multiplicativity of $\lambda_y$ implies the following:
\begin{corollary} \label{divisor} Suppose that $Y$ is the zero locus of a section of a vector bundle $E\to M$, which is transversal to the zero-section. Then the motivic Chern  class is
 \[\mc(Y\subset M)=e(E)\lambda_y(-E)\mc(M). \]
\end{corollary}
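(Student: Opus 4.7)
The plan is to combine three standard ingredients: the homology property \eqref{homology-property} of $\mc$, multiplicativity of $\lambda_y$ on short exact sequences, and the transversal-section description $e(E) = [Y\subset M]$ of the K-theoretic Euler class from Section 2.2.

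Transversality forces $Y$ to be smooth with normal bundle $N_{Y/M} \cong E|_Y$. Writing $i : Y \hookrightarrow M$ for the inclusion, \eqref{homology-property} applied to the isomorphism $i : Y \to i(Y)$ gives
\[\mc(Y \subset M) \;=\; i_!\bigl(\lambda_y(T^*Y)\bigr).\]
Dualising the normal bundle sequence produces the conormal exact sequence $0 \to i^*E^* \to i^*T^*M \to T^*Y \to 0$, and multiplicativity of $\lambda_y$ on short exact sequences (a formal consequence of $\lambda_y(A\oplus B) = \lambda_y(A)\lambda_y(B)$) yields
\[\lambda_y(T^*Y) \;=\; i^*\lambda_y(T^*M)\cdot i^*\lambda_y(E^*)^{-1} \;=\; i^*\bigl(\mc(M)\cdot\lambda_y(-E)\bigr),\]
where $\lambda_y(-E)$ is the $\lambda$-ring inverse, well defined in $K(M)[[y]]$ because $\lambda_y$ of any bundle has constant term $1$. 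The projection formula $i_!(i^*\alpha)=i_!(1)\cdot\alpha$, together with $i_!(1)=e(E)$ from the transversal-section characterisation of the topological Euler class, then combines the two displays to produce the asserted identity.

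The argument is essentially three lines once the ingredients are lined up, and I do not foresee a substantive obstacle. The only piece of bookkeeping worth flagging is the $E$ versus $E^*$ distinction: the conormal sequence produces $\lambda_y(E^*)^{-1}$ directly, and the compact notation $\lambda_y(-E)$ absorbs this via the duality conventions of the $\lambda$-ring formalism on $K(M)$.
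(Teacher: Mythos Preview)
Your argument is correct and is exactly the intended one: the paper offers no proof beyond the single clause ``the multiplicativity of $\lambda_y$ implies the following'', and your three steps (push forward $\lambda_y(T^*Y)$ via the homology property, split $T^*Y$ using the conormal sequence and multiplicativity, then apply the projection formula with $i_!1=e(E)$) are precisely how one unpacks that hint. Your flag about the $E$ versus $E^*$ bookkeeping is also on point: the paper's subsequent example ($\mc(Z_d)=\frac{(1+yt)^{n+1}}{1+y}\cdot\frac{1-t^d}{1+yt^d}$ with $E=(\gamma^*)^d$) confirms that $\lambda_y(-E)$ is indeed being used for $\lambda_y(E^*)^{-1}$, just as your conormal computation produces.
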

\begin{example} A smooth degree $d$ hypersurface $Z_d\subset \P^n$ is the zero locus of a section of the line bundle $(\gamma^*)^d$. Using the divisor trick we get
\[\mc(Z_d)=\frac{(1+yt)^{n+1}}{1+y}\cdot \frac{1-t^d}{1+yt^d}.\]
A closed formula for the $\chi_y$ genus gets complicated, so we give the answer for small $n$ only:
\[ n=2:\ \  \chi_y(Z_d)=\left(\binom{d-1}{2}+1\right)(y-1), \]
\[n=3:\ \ \chi_y(Z_d)=\left(\binom{d-1}{3}+1\right)(y-1)^2+\left(2\binom{d-1}{3}-4\binom{d}{3}+2\right)y.\]
\medskip

Substituting $y=0$ we can see that it is consistent with \ref{td-of-Zd}.
\end{example}

\section{Cones}The simplest singularities in some sense are the conical ones. We study two closely related cases.
\subsection{The projective case}
\begin{proposition}\label{xhat}
Let $X\subset \P^n$ be smooth and denote its cone in $\P^{n+1}$ by $\hat X$. Then for the unique polynomials $q_i\in \Z[y]$ and $\hat q_i\in \Z[y]$ such that $\mc(X\subset\P^n)=\sum_{i=0}^{n}q_iH^i$ and $\mc(\hat X\subset \P^{n+1})=\sum_{i=0}^{n+1}\hat q_iH^i$ we have
\[\hat q_i=(1+y)q_i-yq_{i-1}\ \text{ for }i=0,\dots,n\  \  \text{ and }\ \hat q_{n+1}=1-yq_n-(1+y)\chi_y(X),\]
where $q_{-1}=0$ and as we mentioned before $\chi_y(X)=\sum_{i=0}^{n}q_i$.
\end{proposition}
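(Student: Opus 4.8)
The plan is to resolve the only singular point of $\hat X$, its apex $p$, by a single blow-up and to transport the computation to the resulting $\P^1$-bundle, where $\mc$ is multiplicative. Blowing up $p$ replaces $\hat X$ by $\tilde X=\P(\gamma|_X\oplus\mathcal O_X)$, a $\P^1$-bundle $\pi_X\colon\tilde X\to X$ with two disjoint sections: the section $X_0\cong X$ that the resolution $\bar\rho\colon\tilde X\to\hat X\subset\P^{n+1}$ contracts to $p$, and a section mapping isomorphically to the copy of $X$ at infinity. Since $\bar\rho$ is an isomorphism over $\hat X\setminus\{p\}$ and $X_0$ is a smooth divisor, applying the homology property to $\bar\rho$ and the motivic property on both $\hat X$ and $\tilde X$ gives
\[\mc(\hat X)=\bar\rho_!\mc(\tilde X)-\bar\rho_!\mc(X_0\subset\tilde X)+\mc(p\subset\P^{n+1}).\]
Here $\mc(p\subset\P^{n+1})=H^{n+1}$, and since $\bar\rho$ collapses $X_0\cong X$ to $p$ the middle term is $(\bar\rho|_{X_0})_!\lambda_y(T^*X)=\bigl(\int_X\lambda_y(T^*X)\bigr)H^{n+1}=\chi_y(X)H^{n+1}$; this fibre integral over the contracted section is precisely how $\chi_y(X)$ enters the answer.

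It then remains to compute $\bar\rho_!\mc(\tilde X)$. Writing $u=[\gamma_{\tilde X}]$ for the relative tautological class and $t_X=\pi_X^*[\gamma|_X]$, the projective-bundle relation in $K(\tilde X)$ is $(u-1)(u-t_X)=0$, and one checks $\bar\rho^*t=u$. Multiplicativity of $\lambda_y$ along the relative tangent sequence gives $\mc(\tilde X)=\lambda_y(T^*_{\mathrm{rel}})\cdot\pi_X^*\lambda_y(T^*X)$, and as the fibre is a curve $T^*_{\mathrm{rel}}$ is a single line bundle; a short manipulation with the relation yields the clean identity $[T^*_{\mathrm{rel}}]=u-[\mathcal O_{X_0}]$, hence $\lambda_y(T^*_{\mathrm{rel}})=\bar\rho^*(1+yt)-y[\mathcal O_{X_0}]$. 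The $X_0$-supported summand, multiplied by $\pi_X^*\lambda_y(T^*X)$ and pushed down, contributes $-y\chi_y(X)H^{n+1}$ exactly as above, while the first summand contributes $(1+yt)\,\bar\rho_!\pi_X^*\lambda_y(T^*X)$ by the projection formula.

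The crux is thus the pushforward lemma $\bar\rho_!\pi_X^*\alpha=g(H)$ for $\alpha\in K(X)$, where $g$ is the degree $\le n$ representative of $\iota_{X!}\alpha=g(H)\in K(\P^n)$; applied to $\alpha=\lambda_y(T^*X)$ it gives $\bar\rho_!\pi_X^*\lambda_y(T^*X)=q(H)$ with no $H^{n+1}$ term, where $q(H)=\mc(X\subset\P^n)$. I would prove it by factoring $\bar\rho$ through $W=\mathrm{Bl}_p\P^{n+1}$ and using flat base change $(\iota_{\tilde X})_!\pi_X^*=\pi^*\iota_{X!}$ for the fibre square $\tilde X=\pi^{-1}(X)$, reducing the claim to the blow-down identity $\sigma_!\,\pi^*g(H)=g(H)$ on $W$. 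This last identity is the step I expect to be the \emph{main obstacle}; it follows from the nondegeneracy of the $K$-theoretic intersection pairing on $\P^{n+1}$ together with the fibre integral $\pi_!\bigl((1-u)^j\bigr)=H^{j-1}$ for the $\P^1$-bundle $W\to\P^n$, which matches $\int_{\P^{n+1}}g(H)H^j$ term by term. Assembling everything yields $\mc(\hat X)=(1+yt)\,q(H)+\bigl(1-(1+y)\chi_y(X)\bigr)H^{n+1}$; writing $1+yt=1+y(1-H)$ and reading off the coefficient of $H^i$ gives $\hat q_i=(1+y)q_i-yq_{i-1}$ for $i\le n$ and $\hat q_{n+1}=1-yq_n-(1+y)\chi_y(X)$, as claimed.
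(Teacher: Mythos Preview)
Your proof is correct, but it follows a genuinely different path from the paper's argument. The paper splits the computation into two independent pieces. For the coefficients $\hat q_0,\dots,\hat q_n$ it never touches the blowup: it observes that a hyperplane $j\colon\P^n\hookrightarrow\P^{n+1}$ missing the apex is transversal to $\hat X$ with $j^{-1}(\hat X)=X$, and applies the motivic Segre pullback $\ms(X)=j^*\ms(\hat X)$ together with $j^*\mc(\P^{n+1})/\mc(\P^n)=1+yt$; this immediately gives $\hat q_i=(1+y)q_i-yq_{i-1}$. For the remaining coefficient $\hat q_{n+1}$ the paper does introduce the blowup resolution $\varphi\colon Y\to\hat X$, but explicitly avoids computing $\varphi_!\mc(Y)$ (\quot{the first term is difficult to calculate directly}); instead it pushes everything to a point, reducing to the multiplicativity $\chi_y(Y)=\chi_y(\P^1)\chi_y(X)$ and solving the single linear equation $\sum\hat q_i=1-y\chi_y(X)$.

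You, by contrast, carry out precisely the pushforward the paper sidesteps, and you do so for all coefficients at once. Your two key devices are the identity $[T^*_{\mathrm{rel}}]=u-[\mathcal O_{X_0}]$, which cleanly peels off the apex contribution, and the lemma $\sigma_!\pi^*g(H)=g(H)$ for $g$ of degree $\le n$, which you verify via the intersection pairing and the fibre integral $\pi_!((1-u)^j)=H^{j-1}$. Both check out. What your approach buys is a single closed formula $\mc(\hat X)=(1+yt)\,q(H)+\bigl(1-(1+y)\chi_y(X)\bigr)H^{n+1}$ obtained in one stroke, at the cost of the pushforward lemma. What the paper's approach buys is that the first $n$ coefficients fall out of a one-line Segre pullback with no resolution at all, and the last coefficient needs only the $\chi_y$ genus rather than the full $K$-class pushforward; this is lighter machinery, but it does require recognising the transversality of the hyperplane section and the rigidity of $\chi_y$ under $\P^1$-bundles as separate inputs.
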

\begin{proof}
First notice that $j:\P^n\to\P^{n+1}$ is transversal to $\hat X$ and intersect in $X$. We assumed that $X$ is smooth, so we have the pullback formula:
\begin{equation}\label{pullback4cone}
  \frac{\mc(X)}{\mc(\P^n)}=j^*\left(\frac{\mc(\hat X)}{\mc(\P^{n+1})}\right).
\end{equation}
\begin{definition}The motivic Chern class has its Segre version just as the Chern-Schwartz-MacPherson class:
\[\ms(X\subset M):=\frac{\mc(X\subset M)}{\mc(M)}.\]
\end{definition}
Multiplicativity of $\lambda_y$ implies that the motivic Segre class behaves nicely with respect to transversal maps:
\begin{proposition} \label{segre-trans} Let $f:A\to M$ be a proper (in the topological sense) map of smooth varieties such that $f$ is transversal to $X\subset M$ in the sense that it is transversal to the smooth part of $X$ and does not intersect the singular part of $X$. Then
\[ \ms(f^{-1}(X)\subset A)=f^*\ms(X\subset M).  \]
\end{proposition}

Using that $j^*(H)=H$ and $\frac{j^*\mc(\P^{n+1})}{\mc(\P^n)}=1+yt$ we get that
\begin{equation}\label{firstncoef}
\hat q_i=(1+y)q_i-yq_{i-1}\ \text{ for }i=0,\dots,n.
\end{equation}

To calculate $\hat q_{n+1}$ we consider the blowup at the vertex $0$ of the cone. Restricting to the preimage of the cone  we get a  normal crossing resolution $\varphi:Y\to \hat X\subset \P^{n+1}$, where $ Y$ is a fiber bundle over $X$ with fiber $\P^1$. From the definition \eqref{pnc} we have
\begin{equation}\label{cone-with-reso}
 \mc(\hat X\setminus 0)= \varphi_!\lambda_y(Y)-\chi_y(X)H^{n+1}.
\end{equation}
The first term is difficult to calculate directly so we push forward \eqref{cone-with-reso} to a point:
\begin{equation}\label{toddyofcone}
  \chi_y(\hat X\setminus 0)=\chi_y(Y)-\chi_y(X).
\end{equation}
Now we use that $Y\to X$ is the projective bundle of a vector bundle, so
 \[ \chi_y(Y)=\chi_y(\P^1)\chi_y(X)=(1-y)\chi_y(X), \]
 implying
 \[\chi_y(\hat X)=1-y\chi_y(X). \]
(This product property of $\chi_y$ was already known by Hirzebruch, see e. g. \cite{hirzebruch-topological}.) On the other hand $\chi_y(\hat X)=\sum_{i=0}^{n+1}\hat q_i$ and $\chi_y( X)=\sum_{i=0}^{n} q_i$, so we can express $\hat q_{n+1}$ using \eqref{firstncoef}.
 \end{proof}

 \begin{remark} Proposition \ref{xhat}. immediately generalises to $X$ being a constructible set, if we use the motivic extension of the $\chi_y$ genus, i.e. we take \eqref{tdy-def} as the definition of the $\chi_y$ genus for any constructible set.
 \end{remark}

Substituting $y=0$ we get the following:
\begin{corollary}\label{m0xhat} Express $\mC_0(\hat X\subset \P^{n+1})$ and $\mC_0(X\subset\P^n)$ in their reduced form, i.e as polynomials of degree at most $n+1$ and $n$, respectively, in the variable $H$. Then
  \[\mC_0(\hat X\subset \P^{n+1})=\mC_0(X\subset\P^n)+(1-\td(X))H^{n+1}.\]
\end{corollary}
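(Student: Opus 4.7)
The plan is to read off this corollary as the $y=0$ specialization of Proposition~\ref{xhat}, since both $\mC_0$ and the classical Todd genus are obtained from their $y$-refined cousins by this substitution.

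First I would fix notation: write $\mC_0(X\subset \P^n)=\sum_{i=0}^{n}q_i(0)H^i$ and $\mC_0(\hat X\subset \P^{n+1})=\sum_{i=0}^{n+1}\hat q_i(0)H^i$, where $q_i(0),\hat q_i(0)\in\Z$ denote the polynomials $q_i,\hat q_i\in\Z[y]$ of Proposition~\ref{xhat} evaluated at $y=0$. By definition of the motivic $K$-class, these are the reduced coefficient expansions claimed in the statement.

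Next, substitute $y=0$ into the two formulas of Proposition~\ref{xhat}. For $i=0,\dots,n$ the relation $\hat q_i=(1+y)q_i-y q_{i-1}$ specializes to
\[\hat q_i(0)=q_i(0),\]
so the lowest $n+1$ coefficients of $\mC_0(\hat X)$ and $\mC_0(X)$ coincide. For the top coefficient the relation $\hat q_{n+1}=1-yq_n-(1+y)\chi_y(X)$ specializes to
\[\hat q_{n+1}(0)=1-\chi_0(X).\]

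The only remaining point is to identify $\chi_0(X)$ with $\td(X)$. Since $X$ is smooth and projective, this is precisely the observation made in Subsection~\ref{sec:todd} (and visible already from $\mc(X)=\lambda_y(T^*X)$, which reduces to $\mathcal O_X$ at $y=0$): one has $\chi_y(X)|_{y=0}=\chi(X,\mathcal O_X)=\td(X)$. Plugging this in gives $\hat q_{n+1}(0)=1-\td(X)$, and combining with the lower coefficients yields
\[\mC_0(\hat X\subset \P^{n+1})=\mC_0(X\subset \P^n)+(1-\td(X))H^{n+1},\]
as desired. There is no genuine obstacle here; the only thing to watch is that the ``reduced form'' in the statement matches the normalization of the $q_i$'s, $\hat q_i$'s in Proposition~\ref{xhat}, which it does by construction.
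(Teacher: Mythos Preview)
Your proposal is correct and follows exactly the paper's approach: the paper simply states ``Substituting $y=0$ we get the following'' before the corollary, and you have carried out precisely this substitution in Proposition~\ref{xhat}, together with the identification $\chi_0(X)=\td(X)$ for smooth $X$ from Subsection~\ref{sec:todd}.
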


\begin{remark} The other two $K$-classes of a cone can also be calculated.
 For the pushforward $K$-class we can use the same resolution as above:
 \[ [\hat X]=\varphi_!1,\]
so Proposition \ref{xhat} implies that
\[ [\hat X]=\mC_0(\hat X)+(\td(X)-1) H^{n+1}=[X].\]
So the three $K$-classes of the cone differs only in the top coefficient, which is 0 for the pushforward class and $(1-\td(X))$ for the motivic $K$-class.

For the sheaf theoretic $K$-class of $\hat X$ we need more than the corresponding one  for $X$. It follows from the definition, that
$\mathcal{K}_{\hat X}(t)=\mathcal{K}_{ X}(t)$.
Write $\mathcal{K}_{ X}(t)=\sum p_iH^i$ as a polynomial (of degree possibly much higher than $n+1$) of $H=1-t$. Then the reduced form of the sheaf theoretic $K$-class of $X$ and $\hat X$ are  $\sum_{i=0}^{n} p_iH^i$, and $\sum_{i=0}^{n+1} p_iH^i$, respectively.
\end{remark}

\begin{remark} For CSM classes the calculation is simpler: for $\csm(X):=\sum_{i=0}^{n}q_iH^i\in H^*(\P^n)$ and $\csm(\hat X):=\sum_{i=0}^{n+1}\hat q_iH^i\in H^*(\P^{n+1})$ we get $\hat q_i=q_i+q_{i-1}$ for $i\leq n$ and $\hat q_{n+1}=\chi(\hat X)=q_n+1=\chi(X)+1$.
\end{remark}
\begin{example}
Corollary \ref{m0xhat} allows us to find irreducible examples of varieties for which all 3 $K$-classes are different. Let $X=Z_d\subset\P^2$ a smooth curve of degree $d$. Then $\mC_0(Z_d)\equiv1-t^d\equiv dH-\binom{d}{2}H^2$. Then
\[\mC_0(\hat Z_d)=dH-\binom{d}{2}H^2+\binom{d-1}{2}H^3.\]
On the other hand
\[ [\mathcal{O}_{\hat Z_d}]\equiv1-t^d\equiv dH-\binom{d}{2}H^2+\binom{d}{3}H^3,\]
and
\[ [\hat Z_d]=\mC_0(Z_d\subset\P^n)+H^{n+1}=dH-\binom{d}{2}H^2\]
by the previous remark.

Therefore all these 3 classes of $\hat Z_d$ are different if $d\geq4$. It implies that these hypersurfaces are not Du Bois. In fact it is known that $\hat Z_d\subset \P^{n+1}$ is Du Bois if and only if $d\leq n+1$, so this calculation detects all the non Du Bois cases among the $\hat Z_d$'s.
\end{example}

\begin{remark} Pushing forward the three $K$-classes to the point we get 3 different extensions of the Todd genus to singular varieties. $\int_{\P^n}[\mathcal{O}_X]$ is the holomorphic Euler characteristics. $\int_{\P^n}[X]=\td(\tilde X)$ is the Todd genus of the resolution (note that this is independent of the resolution!). We can call $\chi_{y=0}:=\int_{\P^n}\mC_0(X)$ the \emph{motivic Todd genus} of $X$. Our calculations show that for $X=\hat Z$, the projective cone of the smooth variety $Z$, $\int_{\P^n}[X]=\td(\tilde X)=\td(Z)$ and $\chi_{y=0}(X)=1$, so even these three Todd genus extensions are different for the projective cone of a smooth curve of degree $d$ if $d\geq4$. A similar example was discovered in \cite[ex 14.1]{Weber=EquivariantHirzebruch}.
\end{remark}


\section{Equivariant classes}
If an algebraic linear group $G$ acts on $M$, then we can define the Grothendieck group $K^G_0(M)$ of coherent $G$-sheaves. Also we can define  the Grothendieck group $K_G^0(M)$ of $G$-vector bundles. For smooth $M$ they are isomorphic. For $M=\C^n$ and $\P^n$ the forgetful map to $K^G_{\text top}(M)$---the Grothendieck group  of topological $G$-vector bundles---is also isomorphism. The ring $K(B_GM)$ is much bigger, we will not use it. The definition of the equivariant motivic Chern classes and the various equivariant $K$-classes are straightforward, one can repeat the same definitions equivariantly. For the existence of the equivariant motivic Chern class and more details about equivariant $K$-theory see e.g. \cite{feher2018motivic}. Notice that compared to cohomology the transition to the equivariant theory is much smoother, we do not need classifying spaces and approximation of the classifying spaces by algebraic varieties.

There are several reasons to introduce equivariant theory in this context. One is that implicitly we are already using scalar equivariant objects: vector bundles admit a canonical scalar action therefore they admit equivariant Euler class, which in cohomology can be identified with the total Chern class and in $K$-theory with $\lambda_y$ of the vector bundle, which is the starting point of building the motivic Chern class.

The second reason is that the definition of the Hilbert function is based on a scalar action: the homogeneous coordinate ring of $X\subset \P^n$ is graded according to the natural scalar action on it. This implies the following reformulation:

Consider first the scalar $\Gamma:=\GL(1)$-action on $\C^{n+1}$. For any $X\subset \P^n$ the cone $CX\subset \C^{n+1}$ is $\Gamma$-invariant, so its structure sheaf $\mathcal{O}_{CX}$ is a coherent $\Gamma$-sheaf.
\begin{proposition} \label{hilbert-function}The Grothendieck group $K^{\Gamma}_0(\C^{n+1})$ is isomorphic to $\Z[t,t^{-1}]$ via the restriction map to the origin, and
\[ [\mathcal{O}_{CX}]_{\Gamma}=\mathcal{K}_X(t).\]

\end{proposition}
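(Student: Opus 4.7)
My plan is to handle the two assertions with a single uniform tool---graded free resolutions---and to use Hilbert's syzygy theorem as the central input.

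Let $S=\C[x_0,\dots,x_n]$. Under the scalar $\Gamma$-action, the category of $\Gamma$-equivariant coherent sheaves on $\C^{n+1}$ is equivalent to the category of finitely generated graded $S$-modules, the grading recording the $\Gamma$-weight. The flat pullback $\pi^*\colon R(\Gamma)=K^\Gamma_0(\mathrm{pt})=\Z[t,t^{-1}]\to K^\Gamma_0(\C^{n+1})$ sends a character $t^a$ to the class of $\mathcal{O}_{\C^{n+1}}$ tensored with the $1$-dimensional representation of weight $a$, i.e.\ to the free module $[S(-a)]$ whose generator sits in degree $a$. In the other direction, the restriction to the origin acts by $[M]\mapsto\sum_i(-1)^i[\mathrm{Tor}^S_i(M,\C)]$, and in particular kills $\pi^*(t^a)$ down to $t^a$, so composition with $\pi^*$ is the identity on $R(\Gamma)$.

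First I would establish the isomorphism claim. The observation above already shows $\pi^*$ is split injective. For surjectivity I would invoke Hilbert's syzygy theorem: every finitely generated graded $S$-module $M$ admits a finite graded free resolution
\begin{equation*}
0\to F_\ell\to\cdots\to F_1\to F_0\to M\to 0,\qquad F_i=\bigoplus_j S(-a_{ij}),
\end{equation*}
so in the Grothendieck group $[M]=\sum_{i,j}(-1)^i[S(-a_{ij})]=\pi^*\bigl(\sum_{i,j}(-1)^i t^{a_{ij}}\bigr)$ lies in the image of $\pi^*$. Hence $\pi^*$ is an isomorphism and restriction to the origin is its inverse.

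Second, I would apply this to $M=S(X)=S/I_X$. The computation above yields immediately
\begin{equation*}
[\mathcal{O}_{CX}]_\Gamma=\sum_{i,j}(-1)^i t^{a_{ij}}\in\Z[t,t^{-1}].
\end{equation*}
On the other hand, additivity of Hilbert series across the same resolution together with $HS(S(-a))=t^a/(1-t)^{n+1}$ gives
\begin{equation*}
HS(X)=\frac{\sum_{i,j}(-1)^i t^{a_{ij}}}{(1-t)^{n+1}},
\end{equation*}
so by the uniqueness clause of Theorem~\ref{kpoly} the numerator is exactly $\mathcal{K}_X(t)$, matching $[\mathcal{O}_{CX}]_\Gamma$.

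The main obstacle is really the first half: identifying $K^\Gamma_0(\C^{n+1})$ with $R(\Gamma)$ rests on finiteness of graded free resolutions (Hilbert's syzygy theorem) and on verifying that the alternating sum $\sum(-1)^i t^{a_{ij}}$ is independent of the chosen resolution---a standard chain-homotopy argument, but one that must be dispatched cleanly before the computation of $[\mathcal{O}_{CX}]_\Gamma$ even makes sense. Once that structural statement is in hand the second half is bookkeeping, because the equivariant $K$-class and the numerator of the Hilbert series are computed from the same resolution by formally the same alternating sum.
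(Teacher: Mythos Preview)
Your argument is correct. The paper itself does not supply a proof of this proposition at all: it simply cites \cite[5.4.17]{chriss-ginzburg} for the isomorphism $K^{\Gamma}_0(\C^{n+1})\cong\Z[t,t^{-1}]$ and refers to \cite[p.~172]{miller2004combinatorial} and \cite[6.6.8]{chriss-ginzburg} for the identification with the $\mathcal{K}$-polynomial, calling the latter ``folklore.'' So there is nothing to compare against except those references, and what you have written is exactly the standard argument one finds there: identify equivariant coherent sheaves with graded modules, invoke Hilbert's syzygy theorem to produce finite graded free resolutions, and read off both the $K$-class and the Hilbert-series numerator from the same alternating sum of twists.

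One small remark on your closing paragraph: the independence of $\sum_{i,j}(-1)^i t^{a_{ij}}$ from the chosen resolution is not really an obstacle requiring a separate chain-homotopy argument. In $K_0$ it is automatic: any two finite free resolutions of $M$ have the same alternating sum of terms, because both equal $[M]$ by the defining relations of the Grothendieck group (this is the standard ``additivity along exact sequences'' applied inductively to the resolution). So once you know a finite graded free resolution exists, the well-definedness of the class is immediate and you can proceed directly to the computation.
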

The first statement is proved in \cite[5.4.17]{chriss-ginzburg}. The statement on the $\mathcal{K}$-polynomial is folklore, see \cite[p. 172]{miller2004combinatorial}. It is essentially equivalent to \cite[6.6.8]{chriss-ginzburg}.
Consequently the Kirwan-type homomorphism  $K^{\Gamma}_0(\C^{n+1})\to K_0(\P^n)$ with $t\mapsto t$ maps   $\mathcal{K}_X(t)=[\mathcal{O}_{CX}]_{\Gamma}$ to $[\mathcal{O}_X]$, i.e. assigns the Hilbert polynomial to the Hilbert function.

\begin{remark}\label{rem:awkward}
Notice, that $t$ as an element in the representation ring of $\Gamma:=\GL(1)$ is the inverse of the standard representation. It looks awkward first but this is the choice which reflects that the hyperplane is the zero locus of a section of the \emph{ dual} of the tautological bundle. A more conceptual explanation will be given in \ref{sec:Kirwan}.
\end{remark}
The third reason to introduce the equivariant theory might be the most important: Equivariant motivic, $K$, etc classes on $G$-invariant subvarieties are \emph{universal classes for degeneracy loci}.  Let us explain this statement in more details. An introduction into the cohomological theory can be found e.g. in \cite[\S 2.]{thomseries}  and for the CSM case in \cite{csm} and \cite{ohmoto-sing-char}, so we concentrate on the $K$-theory cases here.

\subsection{Universal classes in $K$-theory}
Let $G$ be a connected linear algebraic group and suppose that $\pi_P:P\to M$ is a principal $G$-bundle over the smooth $M$ and $A$ is a smooth $G$-variety. Then we can define a map
 \[a:K_G(A)\to K(P\times_G A)  \]
by association: For any $G$-vector bundle $E$  over $A$ the associated bundle $P\times_G E$ is a vector bundle over $P\times_G A$.

\begin{proposition} Let $Y\subset A$ be $G$-invariant. Then
\[ \mS(P\times_G Y\subset P\times_G A)=a\big(\mS_G(Y\subset A)\big).\]
\end{proposition}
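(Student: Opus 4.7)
The plan is to reduce to the smooth case via a $G$-equivariant normal crossing extension. First I would unwind the definition: since $P\times_G A$ is smooth with the short exact sequence $0\to a(TA)\to T(P\times_G A)\to \pi^*TM\to 0$ along the bundle projection $\pi:P\times_G A\to M$, multiplicativity of $\lambda_y$ gives $\mC(P\times_G A)=\pi^*\mC(M)\cdot a(\mC_G(A))$. Since $a$ is a ring homomorphism, the claim becomes equivalent to
\[ \mC(P\times_G Y\subset P\times_G A)=\pi^*\mC(M)\cdot a\big(\mC_G(Y\subset A)\big). \quad (\star) \]

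For the smooth case, suppose $Y=D$ is a smooth $G$-invariant submanifold with inclusion $i:D\hookrightarrow A$. Then $\mC_G(D\subset A)=i_!\lambda_y(T^*D)$, and analogously on the associated-bundle side $\mC(P\times_G D\subset P\times_G A)=(P\times_G i)_!\lambda_y\big(T^*(P\times_G D)\big)$. The splitting $T(P\times_G D)=\pi_D^*TM\oplus a(TD)$ together with multiplicativity of $\lambda_y$, the projection formula for $(P\times_G i)_!$, and the compatibility $(P\times_G i)_!\circ a=a\circ i_!$ establishes $(\star)$ when $Y=D$.

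For general $Y$ I would invoke a $G$-equivariant proper normal crossing extension $f:\tilde A\to A$ (existing by the equivariant weak factorization theorem) with $G$-invariant smooth divisor components $D_1,\dots,D_s$. Applying the functor $P\times_G(-)$ produces a normal crossing extension $P\times_G f:P\times_G\tilde A\to P\times_G A$ for $P\times_G Y\subset P\times_G A$, with intersections $P\times_G D_I$. Formula \eqref{pnc} now writes both sides of $(\star)$ as alternating sums over $I\subset\underline{s}$ of proper pushforwards of the motivic classes of the smooth invariant intersections $D_I$ (respectively $P\times_G D_I$), and the smooth case combined with $(P\times_G f^I)_!\circ a=a\circ f^I_!$ matches corresponding summands. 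Dividing by the tangent factor computed in the first paragraph yields the statement about $\mS$.

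The main obstacle is the compatibility $(P\times_G g)_!\circ a=a\circ g_!$ for a proper $G$-equivariant map $g$, which also requires extending $a$ from equivariant $K$-classes on smooth $G$-varieties to equivariant $K$-classes supported on invariant subvarieties. Over a trivialization of $P$ this reduces to the naturality of pushforward under a product projection, but packaging it globally—together with verifying that the weak factorization theorem really provides a $G$-equivariant normal crossing extension in this context—is where the technical effort lies.
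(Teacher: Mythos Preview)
The paper does not actually prove this proposition; it simply writes ``The proof can be found in \cite[Pr~8.7]{guang}.'' So there is no in-paper argument to compare against. Your outline is a reasonable reconstruction of the standard proof and is almost certainly close in spirit to what the cited reference does: reduce to a statement about $\mC$ via the tangent splitting of $P\times_G A$, verify it on smooth invariant submanifolds using the compatibility of pushforward with association, and extend to general $Y$ by an equivariant normal crossing extension and the inclusion--exclusion formula \eqref{pnc}.

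Two small remarks. First, what you need for the general step is an \emph{equivariant resolution/compactification with simple normal crossing boundary} (existence for linear algebraic groups is standard), not the weak factorization theorem per se; the paper's own phrasing conflates these, so you are following its terminology, but be aware of the distinction. Second, you are right that the crux is the base-change identity $(P\times_G g)_!\circ a = a\circ g_!$ for proper $G$-equivariant $g$; this is exactly the kind of compatibility that the cited reference packages, and once granted, the rest of your argument goes through cleanly.
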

The proof can be found in \cite[Pr 8.7]{guang}. Substituting $y=0$ we get the corresponding statement for the motivic $K$-class:

\begin{proposition} Let $Y\subset A$ be $G$-invariant. Then
\[ \mC_0(P\times_G Y\subset P\times_G A)=a\big(\mC_0^G(Y\subset A)\big).\]
\end{proposition}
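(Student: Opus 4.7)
The plan is to deduce this directly from the preceding proposition on the motivic Segre class by specializing at $y = 0$. Two small ingredients are needed: first, that for a smooth ambient space the Segre class at $y = 0$ recovers $\mC_0$; and second, that the association map $a$ commutes with the substitution $y = 0$.

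For the first ingredient, recall $\mc(N) = \lambda_y(T^*N)$ for smooth $N$, and $\lambda_y(E) = \sum_{i \geq 0}[\Lambda^i E]\,y^i$ has constant term $[\Lambda^0 E] = 1$. Both $A$ (by hypothesis, with its $G$-action) and $P \times_G A$ (a fiber bundle with smooth fiber $A$ over the smooth base $M$, with free $G$-action on $P \times A$) are smooth, so $\mc(A)|_{y=0} = 1$ in $K_G(A)$ and $\mc(P \times_G A)|_{y=0} = 1$ in $K(P \times_G A)$. Consequently, in both settings,
\[\mS(X \subset N)|_{y=0} = \left.\frac{\mc(X \subset N)}{\mc(N)}\right|_{y=0} = \mc(X \subset N)|_{y=0} = \mC_0(X \subset N).\]

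For the second ingredient, $a$ sends a $G$-vector bundle $E$ over $A$ to the associated bundle $P \times_G E$ over $P \times_G A$, hence extends $\Z[y]$-linearly to a map $K_G(A)[y] \to K(P \times_G A)[y]$ and in particular commutes with evaluation at $y = 0$. Here I use that the motivic Chern and Segre classes in question are genuinely polynomial in $y$ by construction via $\lambda_y$ (the quotient in $\mS$ makes sense because $\mc(N)$ is a unit in $K(N)[[y]]$, having constant term $1$), so the substitution $y = 0$ is legitimate on both sides of the Segre identity.

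Putting the two pieces together, applying the preceding proposition to $Y \subset A$ and evaluating at $y = 0$ yields
\[\mC_0(P \times_G Y \subset P \times_G A) = \mS(P \times_G Y \subset P \times_G A)|_{y=0} = a\bigl(\mS_G(Y \subset A)\bigr)|_{y=0} = a\bigl(\mC_0^G(Y \subset A)\bigr),\]
which is the claim. There is no serious obstacle here; the only point meriting a moment's attention is the commutativity of $a$ with the $y=0$ specialization, and this is immediate from the polynomiality of the classes involved together with the fact that $y$ is a formal parameter inert under $a$.
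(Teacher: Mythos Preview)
Your proof is correct and follows exactly the paper's approach: the paper simply says ``Substituting $y=0$ we get the corresponding statement for the motivic $K$-class'' after the Segre-class proposition, and you have spelled out the details behind that one-line justification. Your additional care about $\mc(N)|_{y=0}=1$ and the $\Z[y]$-linearity of $a$ is a welcome elaboration of what the paper leaves implicit.
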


Similarly one can show the analogous statement for the push forward $K$-class:
\begin{proposition} Let $Y\subset A$ be a $G$-invariant subvariety. Then
\[ [P\times_G Y\subset P\times_G A]=a[Y\subset A]_G.\]
\end{proposition}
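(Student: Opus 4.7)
The plan is to pass through a $G$-equivariant resolution of $Y$ and combine it with the observation that the association functor $P\times_G(-)$ is compatible with proper pushforward.

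First, by equivariant resolution of singularities, choose a proper birational $G$-equivariant morphism $\varphi\colon\tilde Y\to Y\subset A$ with $\tilde Y$ smooth. By definition
\[
[Y\subset A]_G=\varphi_!1\in K_G(A).
\]
Applying the associated-bundle construction fiberwise produces a morphism
\[
\Phi\colon P\times_G\tilde Y\longrightarrow P\times_G Y\subset P\times_G A.
\]
Because $P\to M$ is a principal $G$-bundle, $P\times_G(-)$ is a locally trivial fibration whose fiber is its argument, so $P\times_G\tilde Y$ is smooth, $\Phi$ is proper, and $\Phi$ restricts to an isomorphism over the preimage of the smooth locus of $P\times_G Y$. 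Hence $\Phi$ is a resolution, and the definition of the pushforward $K$-class gives
\[
[P\times_G Y\subset P\times_G A]=\Phi_!1.
\]

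The second step is to verify that association commutes with pushforward: for every $\alpha\in K_G(\tilde Y)$ one has $a(\varphi_!\alpha)=\Phi_!\,a(\alpha)$. This is an instance of flat base change. The quotient projection $P\times A\to P\times_G A$ is smooth (in fact an étale-locally trivial $G$-torsor), and pulling $\varphi$ back along $P\to M$ produces a cartesian square whose horizontal map is $\mathrm{id}_P\times\varphi$, for which base change holds on the nose; one then descends by $G$. Applying the identity to $\alpha=1$ and using $a(1)=1$ yields
\[
a\bigl([Y\subset A]_G\bigr)=a(\varphi_!1)=\Phi_!\,a(1)=\Phi_!1=[P\times_G Y\subset P\times_G A],
\]
which is the desired equality.

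The main obstacle is to make the base-change step fully rigorous, since $\varphi$ need not be flat. What has to be checked is that base change along the smooth projection $P\times A\to A$ commutes with the derived pushforward in $K$-theory; once this is granted, the free $G$-action lets us descend to $P\times_G A$. This is the same compatibility that underlies the two preceding propositions of this subsection (for $\mathrm{mS}$ and for $\mathrm{mC}_0$), so it can be invoked rather than reproved. A minor but real point to verify is that the equivariant resolution $\varphi$ of $Y$ really gives rise to a resolution of the associated subvariety $P\times_G Y$ in the sense used to define $[-\,\subset\,-]$; this follows from the local triviality of $P\times_G(-)$, which preserves smoothness, properness, and the property of being an isomorphism over a dense open set.
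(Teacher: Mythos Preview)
Your proof is correct and follows exactly the approach the paper outlines: take a $G$-equivariant resolution $\varphi:\tilde Y\to A$, observe that the associated map $\hat\varphi:P\times_G\tilde Y\to P\times_G A$ is a resolution of $P\times_G Y$, and verify that $\hat\varphi_!1=a(\varphi_!1)$. The paper gives only this one-sentence sketch, so your version is a more detailed execution of the same idea.
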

For the proof you need to check that for a $G$-equivariant resolution $\varphi:\tilde Y\to A$ of $Y$ the induced resolution $\hat \varphi:P\times_G \tilde Y\to P\times_G A$ of $P\times_G Y$ has the property $\hat \varphi_!1=a \varphi_!1$.

For the sheaf $K$-class we have
\begin{proposition} Let $Y\subset A$ be a $G$-invariant subvariety. Then
\[ [\mathcal{O}_{P\times_G Y}]=a[\mathcal{O}_Y]_G.\]
\end{proposition}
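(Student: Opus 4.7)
The plan is to mimic, in the $G$-equivariant setting, the definition of $[\mathcal{O}_Y]$ via a locally free resolution, and then to check that the association map commutes with the passage from such a resolution to the class of the sheaf it resolves. First I would choose a finite $G$-equivariant locally free resolution
\[E_\bullet \to \mathcal{O}_Y \to 0\]
on $A$. Under the equivariant duality $K_0^G(A) \iso K_G^0(A)$, valid because $A$ is smooth, this gives $[\mathcal{O}_Y]_G = \sum_i (-1)^i [E_i]$, and therefore
\[a[\mathcal{O}_Y]_G = \sum_i (-1)^i [P \times_G E_i]\]
in $K(P\times_G A)$ by the very definition of association on vector bundle classes.

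The remaining task is to identify this alternating sum with $[\mathcal{O}_{P\times_G Y}]$, which amounts to showing that the complex $P\times_G E_\bullet$ is a locally free resolution of $\mathcal{O}_{P\times_G Y}$ on the smooth variety $P\times_G A$. On coherent sheaves the association functor factors as flat pullback along the projection $p : P\times A \to A$ followed by descent along the principal $G$-bundle $\pi : P\times A \to P\times_G A$; both operations are exact (the first because $p$ is flat, the second by faithfully flat descent). So $P\times_G E_\bullet$ is still exact in positive degrees, and its cokernel is the descent of $p^*\mathcal{O}_Y = \mathcal{O}_{P\times Y}$. Since $P\times Y \to P\times_G Y$ is again a principal $G$-bundle, the structure sheaf of $P\times Y$ descends to $\mathcal{O}_{P\times_G Y}$; equivalently, the $G$-invariant ideal sheaf $\mathcal{I}_Y$ associates to the ideal sheaf of $P\times_G Y$ in $P\times_G A$. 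This completes the identification.

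The main obstacle is the very first step: producing a finite $G$-equivariant locally free resolution of $\mathcal{O}_Y$ on $A$. This requires $A$ to have the equivariant resolution property, which is not automatic for an arbitrary action but holds in the cases of interest (e.g. when $A$ is smooth quasi-projective with a $G$-linearized ample line bundle, or more generally via Thomason's theorem on equivariant $K$-theory of smooth varieties with a suitable group action). Once such an input is secured, the remainder of the argument is the formal exactness of flat pullback and faithfully flat descent.
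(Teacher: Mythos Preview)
The paper does not actually prove this proposition; it is stated without argument, immediately after the analogous statement for the pushforward $K$-class (which does receive a one-line sketch). Your proposal therefore supplies something the paper omits.

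Your argument is sound. Taking a finite $G$-equivariant locally free resolution of $\mathcal{O}_Y$, applying association termwise, and then checking that the associated complex is again a resolution via flat pullback along $P\times A\to A$ followed by faithfully flat descent along $P\times A\to P\times_G A$ is exactly the right mechanism. The identification of the cokernel with $\mathcal{O}_{P\times_G Y}$ is also correct, since descent of the $G$-invariant ideal sheaf $\mathcal{I}_Y$ gives the ideal sheaf of $P\times_G Y$.

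Your caveat about the equivariant resolution property is the only genuine technical point, and you handle it appropriately: in all the situations the paper cares about, $A$ is a smooth quasi-projective $G$-variety (indeed usually a linear representation), so Thomason's theorem guarantees that every $G$-equivariant coherent sheaf admits a finite resolution by $G$-vector bundles, and the duality $K_0^G(A)\cong K_G^0(A)$ holds. With that input in hand, the rest is formal exactly as you describe.
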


In most applications $A$ is a vector space and a section $\sigma:M\to P\times_G A$  sufficiently transversal to $P\times_G Y$ is given. For example

\begin{corollary} \label{locus}Suppose that $\sigma:M\to P\times_G A$ is a section motivically transversal to $P\times_G Y$. Then
\[ \mS(Y(\sigma)\subset M)=a\big(\mS_G(Y\subset A)\big),\]
where $Y(\sigma)=\sigma^{-1}(P\times_G Y)$ is the $Y$-locus of the section $\sigma$.

If $A$ is a vector space then we identify the $K$-theory of $M$  with the $K$-theory of $P\times_G A$ via $\sigma^*$.
\end{corollary}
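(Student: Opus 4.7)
The plan is to deduce this corollary directly from the associated-bundle Proposition just stated (that $\mS(P\times_G Y\subset P\times_G A)=a\bigl(\mS_G(Y\subset A)\bigr)$) together with the pullback formula for the motivic Segre class, Proposition \ref{segre-trans}.

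First I would view $\sigma:M\to P\times_G A$ as a morphism of smooth varieties. The hypothesis of motivic transversality is precisely what is needed to feed $\sigma$ into Proposition \ref{segre-trans} in place of $f$, with $X=P\times_G Y$. This yields
\[\mS(Y(\sigma)\subset M)=\sigma^{*}\mS(P\times_G Y\subset P\times_G A).\]
Substituting the associated-bundle formula into the right-hand side gives
\[\mS(Y(\sigma)\subset M)=\sigma^{*}\,a\bigl(\mS_G(Y\subset A)\bigr),\]
which is the statement in the form valid for general $G$-varieties $A$.

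For the second clause, when $A$ is a vector space the projection $\pi:P\times_G A\to M$ is a vector bundle. By homotopy invariance of $K$-theory, $\pi^{*}:K(M)\to K(P\times_G A)$ is an isomorphism; since $\pi\circ\sigma=\mathrm{id}_M$, the map $\sigma^{*}$ is its two-sided inverse. Under the resulting canonical identification of $K(M)$ with $K(P\times_G A)$, $\sigma^{*}$ becomes the identity and the preceding equality collapses to $\mS(Y(\sigma)\subset M)=a\bigl(\mS_G(Y\subset A)\bigr)$.

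The only real point to verify—and it is essentially bookkeeping—is that motivic transversality as used here is strong enough to apply Proposition \ref{segre-trans}, since $Y$ and hence $P\times_G Y$ are typically singular. Concretely, one must observe that $\sigma$ is transversal to the smooth locus of $P\times_G Y$ and avoids its singular locus; this is part of the definition of motivic transversality, so once it is unpacked the cited proposition applies verbatim and no further computation is required.
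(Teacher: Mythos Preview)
Your overall two–step strategy—pull back $\mS(P\times_G Y)$ along $\sigma$, then replace it by $a\bigl(\mS_G(Y)\bigr)$ via the associated–bundle proposition—is exactly the right idea, and the paper itself does not spell out an argument here but defers both the proof and the definition of ``motivically transversal'' to \cite[\S 8]{guang}.

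The gap is in your final paragraph. Motivic transversality, as set up in \cite{guang}, is a \emph{stratified} condition: one fixes a $G$-invariant decomposition of $Y$ into smooth locally closed strata $Y_i$ and requires $\sigma$ to be transversal to each $P\times_G Y_i$. It does \emph{not} ask that $\sigma$ miss the singular locus of $P\times_G Y$; in the degeneracy-locus applications that motivate the corollary the section typically does meet the singular strata. Consequently Proposition~\ref{segre-trans}, whose hypothesis is literally ``transversal to the smooth part and disjoint from the singular part,'' cannot be invoked with $X=P\times_G Y$ in one stroke, and your assertion that avoiding the singular locus ``is part of the definition of motivic transversality'' is not correct.

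The repair is routine once this is seen. Apply Proposition~\ref{segre-trans} to each stratum $P\times_G Y_i$ separately (each is smooth, so the hypothesis is vacuously satisfied), then use additivity of $\mC$—hence of $\mS$, since the denominator $\mC(M)$ is common—over the stratification on both sides, and finally invoke the associated-bundle proposition stratum by stratum. Summing over $i$ gives the stated identity. Your handling of the second clause (identifying $K(M)$ with $K(P\times_G A)$ via $\sigma^*=(\pi^*)^{-1}$ when $A$ is a vector space) is fine as written.
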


For the proof and the definition of  motivically transversal see \cite[\S 8]{guang}. The corollary implies the analogous statement for $\mC_0$.

For the push forward $K$-class a weaker transversality condition is sufficient: we only need that the pullback of the resolution $\varphi:\tilde Y\to A$ by $\sigma$ is a resolution of $\sigma^{-1}(Y)$.

Recently Rimanyi and Szenes studied the $K$-theoretical Thom polynomial of the singularity $A_2$ in \cite{rimanyi2018residues}. They choose the push forward $K$-class which means that for a reasonably wide class of maps their $K$-theoretical Thom polynomial calculates the  push forward $K$-class of the $A_2$-locus. It would be interesting to study the motivic version of their $K$-theoretical Thom polynomial.

For the sheaf $K$-class the conditions are more complicated. $Y$ has to be Cohen-Macaulay of pure dimension (in many applications like \cite{rimanyi2018residues} this is not satisfied). If $\sigma^{-1}(Y)$ is also of pure dimension and its codimension agrees with the codimension of $Y$ then $[\mathcal{O}_{Y(\sigma)}]=a[\mathcal{O}_{Y}]_G$, where $Y(\sigma)$ is the pull back scheme. To ensure that $Y(\sigma)$ is reduced we need further transversality conditions.

\subsection{Equivariant classes of cones in cohomology: the projective Thom polynomial}
Earlier we explained the connection between the motivic Chern class of $X\subset\P^n$ and of its  projective cone. Just as interesting is the case of the affine cone, however we are forced to use equivariant setting, otherwise there is not enough information in the class of the affine cone.

Suppose that a complex torus $\T$ of rank $k$ acts on $\C^{n+1}$ linearly, i.e. a homomorphism $\rho:\T\to\GL(n+1)$ is given. We assume that the action \emph{contains the scalars}: there is a non zero integer $q$ and a homomorphism $\varphi:\GL(1)\to \T$ such that $\rho\varphi(z)=z^qI$ for all $z\in\GL(1)$. Suppose that $X\subset\P^n$ is $\T$-invariant. Then $CX\subset \C^{n+1}$ is also $\T$-invariant and we can compare their various classes. The first such connection was found about the equivariant cohomology class in \cite{fnr-forms}, what we recall now.

After reparamerization of $\T$ we can assume that $\varphi(z)=\diag(z^{w_1},\dots,z^{w_k})$, where the integers $w_1,\dots,w_k$ are the weights of $\varphi$. Then we have the following:
\begin{proposition} \label{cx2x-coho} The $\T$-equivariant cohomology class
  \[ [X\subset\P^n]\in H^*_{\T}(\P^n)=\Z[a_1,\dots,a_k][x]/(\prod_{i=1}^{n+1}(b_i-x)), \]
where $x=c_1^{\T}(\gamma)$ is the equivariant first Chern class of the tautological bundle with the induced $\T$-action and $b_i$ are the weights of the $\T$-action on $\C^{n+1}$,  can be expressed from the $\T$-equivariant cohomology class
\[[CX\subset \C^{n+1}]\in H^*_{\T}(\C^{n+1})=\Z[a_1,\dots,a_k]\]
by the substitution
\[ [X]=\sub([CX],a_i\mapsto a_i-\frac{w_i}{q}x).\]
\end{proposition}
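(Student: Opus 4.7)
My plan is to reduce the identity to a pointwise verification at the $\T$-fixed points of $\P^n$. After enlarging $\T$ by a generic factor if necessary, I may assume the weights $b_1,\dots,b_{n+1}$ are pairwise distinct, so $\P^n$ has isolated $\T$-fixed points $[e_1],\ldots,[e_{n+1}]$. By Atiyah--Bott localization, the restriction map $H^*_\T(\P^n)\to\bigoplus_j H^*_\T([e_j])$ defined by $\alpha(x,a)\mapsto(\alpha(b_j,a))_j$ is injective, so it suffices to prove, for each $j$,
\[
[X]\big|_{x=b_j}\;=\;[CX]\big|_{a_i\mapsto a_i-(w_i/q)b_j}.
\]

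The substitution $a_i\mapsto a_i-(w_i/q)b_j$ sends each weight $b_i=\sum_\ell m_{i,\ell}a_\ell$ to $b_i-b_j$, by the scalar-containment identity $\sum_\ell m_{i,\ell}w_\ell=q$. Hence the right-hand side equals the equivariant class of $CX$ for the \emph{twisted} $\T$-action on $\C^{n+1}$ whose coordinate weights are $b_i-b_j$; in this twisted action $v_j$ has weight $0$ and $\varphi(\GL(1))$ acts trivially. The affine hyperplane $\{v_j=1\}$ is therefore $\T$-invariant, and it meets $CX$ transversely (the cone vertex lies off the hyperplane, and at each point of the intersection the line through the origin in $CX$ is not tangent to $\{v_j=1\}$); so the pullback along the embedding $\{v_j=1\}\hookrightarrow\C^{n+1}$ sends the twisted $[CX]$ to $[Y]$, where $Y:=CX\cap\{v_j=1\}$. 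On the other hand, the chart map $\phi_j\colon U_j=\{[v]:v_j\neq 0\}\xrightarrow{\sim}\{v_j=1\}$, $[v]\mapsto v/v_j$, is a $\T$-equivariant isomorphism carrying the natural tangent action on $U_j$ (with weights $b_i-b_j$) onto the twisted action on $\{v_j=1\}$, and it sends $X\cap U_j$ bijectively onto $Y$. Consequently $[Y]=[X\cap U_j\subset U_j]=[X]|_{[e_j]}$, giving the required pointwise identity.

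The principal technical subtlety is making the ``twist'' above rigorous: the character map $a_\ell\mapsto a_\ell-(w_\ell/q)b_j$ is integral on $\chi^*(\T)$ only under divisibility conditions that need not hold, so \emph{a priori} it corresponds to an endomorphism of $\T$ only after tensoring with $\Q$. I would handle this by first proving the scalar case $\T=\GL(1)$ with $q=1$ — where the twist is manifestly a group homomorphism and the formula reduces to the direct checks on coordinate subspaces and complete intersections already illustrated in the earlier examples — and then deducing the general case by functoriality of equivariant cohomology along a diagonal factorization $\rho\colon\T\to\GL(1)^{n+1}\subset\GL(n+1)$ of the representation, since pullback of characters intertwines the two substitution formulas.
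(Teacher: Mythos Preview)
The paper itself does not prove this proposition; it cites \cite{fnr-forms}. The argument the paper gives for the $K$-theoretic analogue (Theorem~\ref{cx2xsegre-k}), which it says is ``essentially the same'', is quite different from yours: it introduces the auxiliary scalar torus $\Gamma=\GL(1)$ (which acts trivially on $\P^n$, so $X$ remains invariant), observes that passing from the $\T$- to the $\Gamma\times\T$-equivariant class of $CX$ is exactly the substitution $a_i\mapsto a_i-(w_i/q)x$ (a formal change of variables, valid because the $\Gamma$-action is already contained in $\T$ via $\varphi$), and then invokes the Kirwan map $\kappa:H^*_{\Gamma\times\T}(\C^{n+1})\to H^*_{\T}(\P^n)$, which sends $[CX]_{\Gamma\times\T}$ to $[X]_{\T}$. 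No fixed-point localization is used.

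Your localization approach is a natural alternative, and the core computation (the substitution sends $b_i\mapsto b_i-b_j$, matching the weights on the affine chart $U_j$) is correct. But there is a genuine gap in the reduction step. ``Enlarging $\T$ by a generic factor'' to force pairwise distinct weights does not work: $X$ is only assumed $\T$-invariant, and it will typically \emph{not} be invariant under any larger torus with distinct weights. For instance, when $\T=\Gamma$ acts by scalars every $X\subset\P^n$ is $\T$-invariant, yet the only varieties invariant under a torus with distinct weights are unions of coordinate subspaces. Without isolated fixed points your chart argument does not apply as stated.

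Your proposed workaround in the final paragraph does not close this gap either. In the scalar case $\T=\GL(1)$, $q=1$, all weights $b_i$ coincide, so there are no isolated fixed points and your main argument is unavailable; one needs a separate direct computation (which is easy: $[CX]_\Gamma=(\deg X)\,a_1^{\codim X}$ and $H=a_1-x$). More seriously, pulling back along $\rho:\T\to\GL(1)^{n+1}$ only transports the formula for varieties invariant under the \emph{full} coordinate torus---again coordinate subspaces---so general $\T$-invariant $X$ are never reached. The clean fix is precisely the paper's device: adjoin the scalar $\Gamma$-factor (harmless for $X$) to make the twist integral with $q=1$, and replace your chart-by-chart verification with the Kirwan map. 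Alternatively, if you want to keep localization, you must restrict to fixed \emph{components} rather than isolated points and carry the transversality argument through on each component.
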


This formula has several useful applications, in particular it helps to calculate the degree of certain subvarieties (see e.g. \cite{fnr-degree}). There is a counterpart which is quite obvious in cohomology:
\begin{proposition} \label{x2cx-coho} The $\T$-equivariant cohomology class
 \[[CX\subset \C^{n+1}]_{\T}\in H^*_{\T}(\C^{n+1})=\Z[a_1,\dots,a_k]\]
  can be expressed from the $\T$-equivariant cohomology class
\[ [X\subset\P^n]_{\T}\in H^*_{\T}(\P^n)=\Z[a_1,\dots,a_k][x]/(\prod_{i=1}^{n+1}(b_i-x)) \]
by the substitution
\[ [CX\subset \C^{n+1}]_{\T}=\sub([X]_{\T},x\mapsto 0),\]
where the the substitution is done into the reduced form of $[X]_{\T}$, the unique polynomial of $x$ degree at most $n$ representing $[X]_{\T}$ in $\Z[a_1,\dots,a_k][x]$.
\end{proposition}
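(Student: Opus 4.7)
My plan is to establish the equality using the natural geometric bridge between the projective and affine cones, namely $\C^{n+1}\setminus\{0\}$. First I would set up the two $\T$-equivariant maps: the scalar quotient $p\colon \C^{n+1}\setminus\{0\}\to\P^n$ and the open inclusion $j\colon \C^{n+1}\setminus\{0\}\hookrightarrow\C^{n+1}$. Since $p^{-1}(X)=CX\setminus\{0\}$ has the same codimension as $X$, flatness of $p$ and transversality of the open inclusion $j$ give
\[
  p^*[X\subset\P^n]_{\T} \;=\; [CX\setminus\{0\}]_{\T} \;=\; j^*[CX\subset\C^{n+1}]_{\T}
\]
in $H^*_{\T}(\C^{n+1}\setminus\{0\})$.

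The next step is to identify this middle group. Since $\C^{n+1}\setminus\{0\}$ is the total space of the tautological line bundle $\gamma\to\P^n$ with its zero section removed, the equivariant Gysin sequence for the zero section gives a long exact sequence whose connecting map is multiplication by $e(\gamma)=x$ on $H^*_{\T}(\P^n)$. The scalar-containment hypothesis forces each weight $b_i\ne 0$ in $\Z[a_1,\dots,a_k]$, so $\prod b_i\ne 0$, and a triangular argument on the $\Z[a_1,\dots,a_k]$-basis $1,x,\ldots,x^n$ using the relation $x^{n+1}=\sigma_1(b)x^n-\cdots\pm\prod b_i$ shows multiplication by $x$ is injective. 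The Gysin sequence therefore collapses to
\[
  H^*_{\T}\bigl(\C^{n+1}\setminus\{0\}\bigr) \;=\; H^*_{\T}(\P^n)/(x) \;=\; \Z[a_1,\dots,a_k]/\bigl(\textstyle\prod b_i\bigr).
\]
Under this identification $p^*$ is \emph{reduce modulo $x$} (equivalently, evaluate the reduced representative at $x=0$), while $j^*$ is the quotient map $\Z[a_1,\dots,a_k]\to\Z[a_1,\dots,a_k]/(\prod b_i)$.

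Combining the two descriptions of $p^*[X]_{\T}=j^*[CX]_{\T}$ yields the congruence
\[
  [X]_{\T}\big|_{x=0} \;\equiv\; [CX]_{\T} \pmod{\textstyle\prod b_i}
\]
in $\Z[a_1,\dots,a_k]$. To upgrade this to equality I would invoke a degree count: both sides are homogeneous of polynomial degree $n-\dim X\le n$ (the common codimension of $X$ in $\P^n$ and $CX$ in $\C^{n+1}$), whereas $\prod b_i$ is homogeneous of degree $n+1$; since $\Z[a_1,\dots,a_k]$ is an integral domain, the congruence forces equality and the proposition follows.

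The main obstacle in this plan is the injectivity of multiplication by $x$ in $H^*_{\T}(\P^n)$, which rests essentially on the scalar-containment hypothesis to guarantee $\prod b_i\ne 0$. Once that is in hand, the Gysin identification and the final degree count are both routine.
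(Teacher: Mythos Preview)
Your argument is correct. The paper does not actually give a proof of this proposition; it simply introduces it as ``a counterpart which is quite obvious in cohomology''. The one-line argument the author presumably has in mind is to read Proposition~\ref{cx2x-coho} backwards: that proposition already hands you the reduced form of $[X]_{\T}$, namely $[CX]_{\T}$ with each $a_i$ replaced by $a_i-\tfrac{w_i}{q}x$, and since $[CX]_{\T}$ is homogeneous of degree $\operatorname{codim} X\le n$ this substituted polynomial has $x$-degree at most $n$ and is therefore already reduced; setting $x=0$ undoes the shift and returns $[CX]_{\T}$.

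Your route through the Gysin sequence of the $\C^*$-bundle $\C^{n+1}\setminus\{0\}\to\P^n$ is a genuinely different, self-contained argument that does not invoke Proposition~\ref{cx2x-coho} at all. It is longer, but it has the merit of making transparent exactly where the scalar-containment hypothesis enters (to force $\prod b_i\ne 0$, hence injectivity of $\cdot\, x$) and why the final step is a pure degree count. Both approaches ultimately rest on the same homogeneity bound $\operatorname{codim} X\le n$; the paper's shortcut just reaches it faster by recycling the previous proposition.
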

\subsection{Projective Thom polynomial for the motivic Chern class} There is an analogous result for the motivic Chern class. First we need to understand $K_{\T}(\P^n)$.

\subsubsection{The Kirwan map in $K$-theory}
\label{sec:Kirwan} We have a Kirwan-type surjective map $\kappa: K_{\Gamma\times\T}(\C^{n+1})\to K_{\T}(\P^n)$. More generally  let $V$ be a $\Gamma\times\T$-vector space for a connected algebraic group $\Gamma$ and assume that $P\subset V$ is an open $\Gamma\times\T$-invariant subset such that $\pi:P\to P/\Gamma$ is a principal $\Gamma$-bundle over the smooth $M:=P/\Gamma$.

Strictly speaking $\Gamma$ acts on $V$ from the left, and on $P$ on the right, so we need to define
\begin{equation}\label{l2r}
  pg:=g^{-1}p
\end{equation}
  for all $g\in \Gamma$ and $p\in P\subset V$.

Notice first that we have a restriction map $r:K_{\Gamma\times\T}(V)\to K_{\Gamma\times\T}(0)$. Then for a $\Gamma\times\T$-representation $W$ we can apply the association map $W\to P\times_\Gamma W$ to induce a map $a:K_{\Gamma\times\T}(0)\to K_{\T}(M)$, and we can define $\kappa:=ar$.

Specializing to $V=\C^{n+1}$, $\Gamma=\GL(1)$ acting as scalar multiplication, $P=\C^{n+1}\setminus 0$ we obtain $\kappa: K_{\Gamma\times\T}(\C^{n+1})\to K_{\T}(\P^n)$. Notice that the switch between left and right action explained above implies that $\kappa(t)=[\gamma]_{\T}$ for $t$ denoting the inverse of the standard representation of $\Gamma=\GL(1)$, explaining the convention in Remark \ref{rem:awkward}.

The $\T$-bundle $\Hom(\gamma,\C^{n+1})$ over $\P^n$ has a  nowhere zero $\T$-equivariant section (the inclusion of $\gamma$ into the trivial bundle $\P^n\times \C^{n+1}$), therefore
\[ e_{\T}\big(\Hom(\gamma,\C^{n+1})\big)=\prod_{i=1}^{n+1}(1-t/\beta_i)=0.\]
It can be checked that this is the only relation, therefore
\[ K_{\T}(\P^n)\iso \Z[\alpha_1,\alpha_1^{-1},\dots,\alpha_k,\alpha_k^{-1}][t,t^{-1}]/(\prod_{i=1}^{n+1}(1-t/\beta_i)).\]

The relation can be rewritten as $\prod_{i=1}^{n+1}(t-\beta_i)=0$ which implies that any element  $\omega\in  K_{\T}(\P^n)$ can be written uniquely as a polynomial of degree at most $n$ in $t$ with coefficients in $ K_{\T}$ (i.e. $\P^n$ is equivariantly formal in $K$-theory for linear $\T$-actions). We call this polynomial the \emph{reduced form} of $\omega$.

\subsubsection{The affine to projective formula}
The analogue of Proposition \ref{cx2x-coho} for  motivic classes is similar, and the proof is essentially the same:
\begin{theorem} \label{cx2xsegre-k} The $\T$-equivariant motivic Segre class
  \[ \ms_{\T}(X\subset\P^n)\in K_{\T}(\P^n)[y]=\Z[\alpha_1,\alpha_1^{-1},\dots,\alpha_k,\alpha_k^{-1}][t,t^{-1}]/(\prod_{i=1}^{n+1}(1-t/\beta_i))[y], \]
where $t=[\gamma]_{\T}$ is the class of the tautological bundle with the induced $\T$-action and $\beta_i$ are the characters of the $\T$-action on $\C^{n+1}$,  can be expressed from the $\T$-equivariant motivic Segre class
\[   \ms_{\T}(C_0X\subset \C^{n+1})\in K_{\T}(\C^{n+1})[y]=\Z[\alpha_1,\alpha_1^{-1},\dots,\alpha_k,\alpha_k^{-1}][y]  \]
by the substitution
\[ \ms_{\T}(X)=\sub(\ms_{\T}(C_0X),\alpha_i\mapsto \alpha_i\cdot t^{-\frac{w_i}{q}}),\]
where $C_0X=CX\setminus 0$.
\end{theorem}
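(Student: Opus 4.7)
The plan is to parallel the cohomological proof (Proposition \ref{cx2x-coho}) using the scalar action of $\Gamma:=\GL(1)$ and the Kirwan map of Section \ref{sec:Kirwan}. The guiding idea is that the substitution $\alpha_i\mapsto\alpha_i t^{-w_i/q}$ is the algebraic manifestation of refining $\T$-equivariance to $\Gamma\times\T$-equivariance, with $\Gamma$ acting on $\C^{n+1}$ as scalars.

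First I would note that since $CX\subset\C^{n+1}$ is a cone, $C_0X$ is stable under the full $\Gamma\times\T$-action, so $\ms_\T(C_0X\subset\C^{n+1})$ lifts canonically to a class $\ms_{\Gamma\times\T}(C_0X\subset\C^{n+1})\in K_{\Gamma\times\T}(\C^{n+1})[y]$. The $j$-th coordinate axis of $\C^{n+1}$ has $\T$-character $\beta_j$ and $\Gamma\times\T$-character $s\beta_j=t^{-1}\beta_j$, where $s=t^{-1}$ is the standard character of $\Gamma$ (using the convention of Remark \ref{rem:awkward}). The hypothesis $\rho\varphi(z)=z^qI$ reads $\sum_i w_ic_{ij}=q$ whenever $\beta_j=\prod_i\alpha_i^{c_{ij}}$, so the substitution $\alpha_i\mapsto\alpha_i t^{-w_i/q}$ sends each $\beta_j$ to $t^{-1}\beta_j$. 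Thus this substitution realizes precisely the passage from $\T$-weights to $\Gamma\times\T$-weights:
\[\ms_{\Gamma\times\T}(C_0X\subset\C^{n+1})=\sub\!\bigl(\ms_\T(C_0X\subset\C^{n+1}),\ \alpha_i\mapsto\alpha_i t^{-w_i/q}\bigr).\]

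Next I would apply the Kirwan map $\kappa:K_{\Gamma\times\T}(\C^{n+1})\to K_\T(\P^n)$, which fixes each $\alpha_i$ and sends $t\mapsto[\gamma]_\T$. The theorem reduces to the Kirwan compatibility $\kappa\bigl(\ms_{\Gamma\times\T}(C_0X\subset\C^{n+1})\bigr)=\ms_\T(X\subset\P^n)$. To establish this, consider the smooth $\T$-equivariant submersion $\pi:\C^{n+1}\setminus 0\to\P^n$, which is also a principal $\Gamma$-bundle and satisfies $\pi^{-1}(X)=C_0X$. By Proposition \ref{segre-trans}, $\pi^*\ms_\T(X\subset\P^n)=\ms_\T(C_0X\subset\C^{n+1}\setminus 0)$, and since Segre classes localize on opens this equals the restriction of $\ms_\T(C_0X\subset\C^{n+1})$ to $\C^{n+1}\setminus 0$. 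Upgrading to $\Gamma\times\T$-equivariance and reading this identity through the descent isomorphism $K_{\Gamma\times\T}(\C^{n+1}\setminus 0)\iso K_\T(\P^n)$ (provided by the principal $\Gamma$-bundle $\pi$) yields the required compatibility.

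The main obstacle is precisely this Kirwan compatibility. The subtle point is the Euler-sequence discrepancy
\[\kappa\,\mc_{\Gamma\times\T}(\C^{n+1})=\prod_{j=1}^{n+1}\bigl(1+y[\gamma]_\T\beta_j^{-1}\bigr)=(1+y)\,\mc_\T(\P^n),\]
so the ambient denominators on the two sides differ by $1+y$, and one must show that the numerator $\kappa\,\mc_{\Gamma\times\T}(C_0X\subset\C^{n+1})$ carries a matching factor of $1+y$ so that both factors cancel in the Segre quotient. I expect this to follow from applying the motivic property to a normal-crossing resolution of $CX$ (in the spirit of the blowup at the vertex used in Proposition \ref{xhat}), carefully tracking the $\Gamma\times\T$-weights throughout; the fibration $Y\to X$ with $\P^1$-fibres that appeared there will be the source of the $1+y$. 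A secondary but routine point is to verify that removing the origin is harmless, which holds because $C_0X$ avoids it by definition.
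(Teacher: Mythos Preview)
Your two-step structure is exactly the paper's proof. The paper records your first step (that the substitution $\alpha_i\mapsto\alpha_i t^{-w_i/q}$ is precisely the change of variables from $\T$- to $\Gamma\times\T$-equivariance) as a one-line proposition, and then states your second step, the Kirwan compatibility
\[
\ms_\T(X)=\kappa\,\ms_{\Gamma\times\T}(C_0X),
\]
as Proposition~\ref{git}, which it does not prove but cites as a special case of \cite[Thm~8.12]{guang}. So the paper's own argument is shorter than yours only because it outsources the second step.

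Your paragraph~2 sketch of that second step via descent along the free $\Gamma$-action on $\C^{n+1}\setminus 0$ is the right idea and is essentially how the cited result is proved. Two small caveats: Proposition~\ref{segre-trans} as stated requires $f$ proper and avoiding the singular locus, neither of which holds for $\pi$; you need the (easier) smooth-submersion version of Segre-class functoriality instead.

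Your paragraph~3, however, is a detour you do not need. The $(1+y)$ discrepancy you flag between $\kappa\,\mc_{\Gamma\times\T}(\C^{n+1})$ and $\mc_\T(\P^n)$ is real at the level of motivic \emph{Chern} classes, but it evaporates for \emph{Segre} classes: the descent isomorphism $K_{\Gamma\times\T}(\C^{n+1}\setminus 0)\cong K_\T(\P^n)$ is a ring isomorphism, and under it both the numerator $\mc_{\Gamma\times\T}(C_0X)$ and the denominator $\mc_{\Gamma\times\T}(\C^{n+1}\setminus 0)$ acquire the same extra $(1+y)$ coming from the vertical tangent line of $\pi$, so their quotient descends to $\ms_\T(X)$ on the nose. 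No blowup or normal-crossing analysis is required; your paragraph~2 argument, once stated for Segre classes and the free-quotient descent, is already complete.
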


It is natural to use the motivic Segre class because it has the transversal pull back property. We need to use $C_0X$ instead of $CX$, because $C_0X$ is the preimage of $X$ under the quotient map $\C^{n+1}\setminus 0\to\P^n$.

Strictly speaking the motivic Segre class lives in the completion of $K_{\T}(\P^n)[y]$ because of the division with $\mC(\P^n)$, but we will not denote this completion. At the end we are mainly interested in the motivic Chern class where the completion is not needed.
\begin{proof}
 First notice that by a simple change of variables we have
\begin{proposition} If the $\T$-action contains the scalars as above then
  \[ \ms_{\Gamma\times\T}(Z)=\sub(\ms_{\T}(Z),\alpha_i\mapsto \alpha_i\cdot t^{-\frac{w_i}{q}})\]
  for any $\T$-invariant (therefore $\Gamma\times\T$-invariant) constructible subset $Z\subset \C^{n+1}$.
\end{proposition}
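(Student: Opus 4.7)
The plan is to interpret the substitution $\sigma:\alpha_i\mapsto\alpha_i\cdot t^{-w_i/q}$ as the pullback on representation rings of a (rational) homomorphism $\psi:\Gamma\times\T\to\T$, $(z,t)\mapsto\varphi(z^{1/q})\cdot t$, under which the $\T$-action on $\C^{n+1}$ is converted into the given $\Gamma\times\T$-action, and then to invoke the naturality of the equivariant motivic Segre class under such pullbacks.

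First I would verify that $\sigma$ exchanges weight systems correctly. Writing each character as $\beta_i=\prod_j\alpha_j^{b_{ij}}$, the hypothesis $\rho\varphi(z)=z^qI$ forces $\sum_j b_{ij}w_j=q$ for every $i$, and a direct computation gives
\[
\sigma(\beta_i)=\beta_i\cdot t^{-\sum_j b_{ij}w_j/q}=\beta_i\cdot t^{-1},
\]
which is precisely the character of the $\Gamma\times\T$-action on the $i$th coordinate of $\C^{n+1}$ (scalar $\Gamma$-weight $t^{-1}$times $\T$-weight $\beta_i$). The divisibility $q\mid\sum_j b_{ij}w_j$ is exactly what makes the fractional $t$-exponents cancel in $\sigma(\beta_i)$, and hence in $\sigma$ applied to any monomial in the $\beta_i$'s.

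Although $\psi$ itself involves $z^{1/q}$, it becomes an honest homomorphism $\tilde\psi:\tilde\Gamma\times\T\to\T$, $(u,t)\mapsto\varphi(u)\cdot t$, after passing to the $q$-fold cover $\pi:\tilde\Gamma\to\Gamma$, $u\mapsto u^q$. A character computation shows that pulling the $\T$-action on $\C^{n+1}$ back along $\tilde\psi$ produces the same action on $\C^{n+1}$ as pulling the $\Gamma\times\T$-action back along the covering map $\tilde\Gamma\times\T\to\Gamma\times\T$: both assign weight $\beta_i\cdot \tilde s^q$ to the $i$th coordinate, where $\tilde s$ denotes the standard character of $\tilde\Gamma$. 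Naturality of the motivic Chern class under equivariant pullbacks---which follows from its definition via $\T$-equivariant proper normal crossing resolutions as in \eqref{pnc} combined with the multiplicativity of $\lambda_y$---then yields $\tilde\psi^*\ms_\T(Z)=\pi^*\ms_{\Gamma\times\T}(Z)$, and by construction $\tilde\psi^*$ agrees with $\sigma$ on generators.

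Finally, since $\pi^*:R(\Gamma\times\T)\hookrightarrow R(\tilde\Gamma\times\T)$ is injective (it sends $s\mapsto\tilde s^q$), the identity $\ms_{\Gamma\times\T}(Z)=\sigma(\ms_\T(Z))$ follows as soon as one checks that $\sigma(\ms_\T(Z))$ actually lives in $R(\Gamma\times\T)[y]$ with no stray fractional powers of $t$. I expect this to be the main technical point: it reduces to showing that $\ms_\T(Z)$ admits an expression involving the generators $\alpha_j$ only through the characters $\beta_i$. For $Z$ a smooth closed $\T$-invariant subvariety this is clear, since $\mc_\T(Z)=i_*\lambda_y(T^*Z)$ and $\mc_\T(\C^{n+1})=\prod_i(1+y\beta_i^{-1})$ are both polynomial in the $\beta_i$'s---the $\T$-weights on $N_{Z/\C^{n+1}}$ are among the $\beta_i$'s at every fixed point---and the general constructible case reduces to the smooth closed case by a $\T$-equivariant proper normal crossing resolution together with the motivic decomposition \eqref{pnc}.
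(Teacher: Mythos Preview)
Your argument is correct and is exactly the detailed version of what the paper records in one phrase as ``a simple change of variables'' without further justification: the substitution $\sigma$ is the pullback along a change of equivariance group, made honest by passing to the $q$-fold cover $\tilde\Gamma\to\Gamma$.

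One simplification for your last paragraph: the integrality of the $t$-exponents in $\sigma(\ms_\T(Z))$ need not be checked separately. From your identity $\tilde\psi^*\ms_\T(Z)=\pi^*\ms_{\Gamma\times\T}(Z)$ the left side already lies in $\operatorname{im}\pi^*=R(\T)[\tilde s^{\pm q}]$, and since the substitution $t^{1/q}\mapsto\tilde s^{-1}$ carries $R(\T)[t^{\pm1}]$ bijectively onto precisely this subring, $\sigma(\ms_\T(Z))$ automatically lives in $R(\Gamma\times\T)$. This is convenient because your reduction of the constructible case via \eqref{pnc} is not quite complete as written: the divisors $D_I$ sit in the resolution $\tilde M$, not in $\C^{n+1}$, so their tangent weights need not literally be among the $\beta_i$. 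If you prefer a direct argument for the integrality, the clean one is to note that the $\T$-action on $\C^{n+1}$ (and hence on any $\T$-equivariant resolution built from it) factors through the image torus $\bar\T=\rho(\T)$, so $\ms_\T(Z)=\rho^*\ms_{\bar\T}(Z)$ and every character of $\bar\T$ is a Laurent monomial in the $\beta_i$, which $\sigma$ sends to an integer power of $t$ times itself.
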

On the other hand we have
\begin{proposition}\label{git}
\[ \ms_{\T}(X)=\kappa\ms_{\Gamma\times\T}(C_0X),\]
\end{proposition}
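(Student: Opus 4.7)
My plan is to reinterpret the Kirwan map $\kappa$ geometrically through the principal $\Gamma$-bundle $\pi\colon P\to\P^n$, where $P=\C^{n+1}\setminus 0$, and then read off the identity from the transversal pullback property of the motivic Segre class. The reason this works is that $\pi$ is a $\T$-equivariant smooth surjection with $\pi^{-1}(X)=C_0X$, so it is the natural geometric bridge between classes on $\C^{n+1}$ and on $\P^n$.

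First I would establish an alternative description of $\kappa$. Since $\C^{n+1}$ is equivariantly contractible to $0$, restriction to the origin is an isomorphism $r\colon K_{\Gamma\times\T}(\C^{n+1})\xrightarrow{\sim}R(\Gamma\times\T)$, inverse to $p^*$ for $p\colon\C^{n+1}\to\mathrm{pt}$. Thus every class has the form $p^*W$, and by the definition of $\kappa$ we have $\kappa(p^*W)=P\times_\Gamma W$. On the other hand, restricting $p^*W$ to the open subset $P$ gives the trivial $\Gamma\times\T$-bundle $P\times W$, whose $\Gamma$-quotient $P\times_\Gamma W$ is precisely its image under the descent isomorphism $\mathrm{desc}\colon K_{\Gamma\times\T}(P)\xrightarrow{\sim}K_\T(\P^n)$ (which exists because $\pi$ is a $\T$-equivariant principal $\Gamma$-bundle). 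Hence $\kappa$ factors as
\[
K_{\Gamma\times\T}(\C^{n+1})\xrightarrow{(\,\cdot\,)|_P}K_{\Gamma\times\T}(P)\xrightarrow{\mathrm{desc}}K_\T(\P^n),
\]
and this is the form I will use to evaluate $\kappa$ on the motivic Segre class.

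The main computation is then short. The motivic Segre class is compatible with restriction to open subsets, so
\[
\ms_{\Gamma\times\T}(C_0X\subset\C^{n+1})\big|_P=\ms_{\Gamma\times\T}(C_0X\subset P).
\]
Since $\pi$ is smooth with $C_0X=\pi^{-1}(X)$, Proposition \ref{segre-trans} gives
\[
\ms_{\Gamma\times\T}(C_0X\subset P)=\pi^*\ms_\T(X\subset\P^n),
\]
and under $\mathrm{desc}$ the pullback $\pi^*$ acts as the identification itself, so $\mathrm{desc}(\pi^*\ms_\T(X))=\ms_\T(X)$. Chaining these three identities yields the proposition.

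The chief technical obstacle is that Proposition \ref{segre-trans} is stated for proper maps that avoid the singular locus of $X$, whereas $\pi$ is neither proper nor, if $X$ is singular, disjoint from $\mathrm{sing}(X)$. I plan to circumvent this by invoking the motivic additivity of $\ms$: stratify $X=\sqcup_\alpha S_\alpha$ into smooth locally closed pieces, apply the transversal pullback identity stratum by stratum (where both the smoothness and the properness objections disappear, since each $\pi|_{\pi^{-1}(S_\alpha)}$ is a trivial $\Gamma$-bundle over $S_\alpha$), and sum. All other steps are formal identifications in equivariant $K$-theory.
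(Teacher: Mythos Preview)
The paper does not actually prove this proposition; it simply records it as a special case of \cite[Thm 8.12]{guang}. Your approach --- factor $\kappa$ as restriction to $P=\C^{n+1}\setminus 0$ followed by descent along the principal $\Gamma$-bundle $\pi:P\to\P^n$, then identify $\ms_{\Gamma\times\T}(C_0X\subset P)$ with $\pi^*\ms_\T(X\subset\P^n)$ --- is the natural unpacking of that citation, and the geometric picture is exactly right.

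The gap is in the last paragraph. Stratifying $X$ removes the ``avoid the singular locus'' hypothesis of Proposition~\ref{segre-trans}, but it does \emph{not} touch the properness hypothesis: that condition is imposed on the ambient map $f:A\to M$, here $\pi:P\to\P^n$, and $\pi$ remains the same non-proper $\C^*$-bundle regardless of how you cut up $X$. Your sentence ``the properness objections disappear, since each $\pi|_{\pi^{-1}(S_\alpha)}$ is a trivial $\Gamma$-bundle'' is not correct --- a $\C^*$-bundle, trivial or not, is never proper. So Proposition~\ref{segre-trans} as stated cannot be invoked even stratum by stratum.

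What you actually need is the Verdier--Riemann--Roch type compatibility of $\mc$ (hence $\ms$) with \emph{smooth} pullback: for a smooth morphism $f:A\to M$ of smooth varieties and any constructible $X\subset M$, one has $\mc(f^{-1}X\subset A)=\lambda_y(T_f^*)\cdot f^*\mc(X\subset M)$, whence $\ms(f^{-1}X\subset A)=f^*\ms(X\subset M)$. This requires no properness and is precisely the content (in the principal-bundle/quotient setting) of the theorem the paper cites. Once you replace your appeal to Proposition~\ref{segre-trans} by this smooth-pullback statement, the rest of your argument goes through verbatim.
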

as a special case of \cite[Thm 8.12]{guang}.
\end{proof}

We can translate the result to motivic Chern class easily:
\begin{theorem} \label{cx2x-k} The $\T$-equivariant motivic Chern class of $X\subset\P^n$ can be calculated via the substitution
\[ \mC_{\T}(X)=\frac{1}{1+y}\sub(\mC_{\T}(C_0X),\alpha_i\mapsto \alpha_i\cdot t^{-\frac{w_i}{q}}).\]
\end{theorem}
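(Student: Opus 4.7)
The plan is to deduce Theorem \ref{cx2x-k} directly from Theorem \ref{cx2xsegre-k} by clearing the denominators that define the motivic Segre class. By definition,
\[ \mC_{\T}(X\subset\P^n)=\ms_{\T}(X\subset\P^n)\cdot \mC_{\T}(\P^n),\qquad \mC_{\T}(C_0X\subset\C^{n+1})=\ms_{\T}(C_0X)\cdot \mC_{\T}(\C^{n+1}). \]
Since the substitution $\alpha_i\mapsto\alpha_i\cdot t^{-w_i/q}$ is a ring homomorphism, it commutes with multiplication and division. Applying it to the second identity and combining with Theorem \ref{cx2xsegre-k} gives
\[ \sub\bigl(\mC_{\T}(C_0X)\bigr)=\ms_{\T}(X)\cdot \sub\bigl(\mC_{\T}(\C^{n+1})\bigr). \]
Hence the theorem reduces to verifying the single identity
\begin{equation}\label{eq:key-ratio}
\frac{\mC_{\T}(\P^n)}{\sub\bigl(\mC_{\T}(\C^{n+1})\bigr)}=\frac{1}{1+y}.
\end{equation}

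First I would compute the right-hand denominator. The cotangent bundle of $\C^{n+1}$ is trivial with $\T$-characters $\beta_1^{-1},\dots,\beta_{n+1}^{-1}$, so
\[ \mC_{\T}(\C^{n+1})=\prod_{i=1}^{n+1}(1+y\beta_i^{-1}). \]
Writing $\beta_i=\prod_j\alpha_j^{m_{ij}}$, the assumption that the action contains the scalars with $\rho\varphi(z)=z^qI$ forces $\sum_j m_{ij}w_j=q$ for every $i$. Consequently $\sub(\beta_i)=\beta_i\cdot t^{-1}$, and therefore
\[ \sub\bigl(\mC_{\T}(\C^{n+1})\bigr)=\prod_{i=1}^{n+1}(1+yt\beta_i^{-1}). \]

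Next I would compute $\mC_{\T}(\P^n)$ via the equivariant Euler sequence
\[ 0\to T^*\P^n\to \gamma\otimes (\C^{n+1})^*\to \C\to 0, \]
together with multiplicativity of $\lambda_y$ in short exact sequences. This yields
\[ \mC_{\T}(\P^n)=\lambda_y(T^*\P^n)=\frac{\prod_{i=1}^{n+1}(1+yt\beta_i^{-1})}{1+y}, \]
exactly as in Example \ref{tdypn} but now equivariantly. Dividing by the formula for $\sub(\mC_{\T}(\C^{n+1}))$ established above immediately gives \eqref{eq:key-ratio}, which completes the proof.

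The computation is essentially bookkeeping once Theorem \ref{cx2xsegre-k} is in hand; the one nontrivial point, and the only place where the scalar hypothesis on $\rho$ is used, is the uniform weight identity $\sum_j m_{ij}w_j=q$ that forces every $\beta_i$ to transform the same way under the substitution. This uniformity is precisely what makes $1/(1+y)$ (rather than a more complicated rational function) the correct correction factor.
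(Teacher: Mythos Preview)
Your argument is correct and is precisely the computation the paper has in mind when it says ``We can translate the result to motivic Chern class easily'': you pass from Segre to Chern by multiplying through by the ambient $\mC$-classes and then check that the ratio $\mC_{\T}(\P^n)/\sub(\mC_{\T}(\C^{n+1}))$ equals $1/(1+y)$ via the equivariant Euler sequence and the weight identity $\sum_j m_{ij}w_j=q$. You have simply written out the details the paper omitted.
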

This formula probably can be used to calculate the Hilbert polynomial of certain subvarieties of the projective space.

\begin{corollary}\label{T=1} Applying Proposition \ref{git} to the trivial torus we get
\[\ms(X)=\kappa\ms_\Gamma(C_0X),\]
implying that
 \[   \mC(X)=\frac{1}{1+y}\kappa\mC_{\Gamma}(C_0X). \]
\end{corollary}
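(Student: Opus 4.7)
The plan is to deduce the first equality directly from Proposition \ref{git} and then convert the Segre-class identity into a motivic-Chern-class identity by multiplying through by the motivic Chern class of the ambient projective space.

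First I would specialize Proposition \ref{git} to the case $\T=\{e\}$ (the trivial torus). Since all the constructions (the motivic Segre class, the Kirwan-type map $\kappa$, and the association map) are defined without any restriction on $\T$, substituting the trivial torus into $\ms_{\T}(X)=\kappa\,\ms_{\Gamma\times\T}(C_0X)$ gives the first displayed identity $\ms(X)=\kappa\,\ms_{\Gamma}(C_0X)$ immediately, with no further work.

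Next I would translate this Segre identity back into a motivic Chern class identity. By definition $\ms(X)=\mC(X)/\mC(\P^n)$ and $\ms_{\Gamma}(C_0X)=\mC_{\Gamma}(C_0X)/\mC_{\Gamma}(\C^{n+1})$. Since $\C^{n+1}$ is smooth with trivial cotangent bundle, and $\Gamma=\GL(1)$ acts on the cotangent fiber $(\C^{n+1})^{*}$ by the character $t$ (using the convention explained in Remark \ref{rem:awkward} and Section \ref{sec:Kirwan}, where $t$ denotes the inverse of the standard $\Gamma$-representation), I would compute
\[\mC_{\Gamma}(\C^{n+1})=\lambda_{y}(T^{*}\C^{n+1})=(1+yt)^{n+1}.\]
From Example \ref{tdypn} I already know $\mC(\P^n)=(1+yt)^{n+1}/(1+y)$, and from the discussion of the Kirwan map in Section \ref{sec:Kirwan} I have $\kappa(t)=t=[\gamma]$ and $\kappa$ is a ring homomorphism, so $\kappa\mC_{\Gamma}(\C^{n+1})=(1+yt)^{n+1}$.

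Combining these ingredients gives
\[\mC(X)=\mC(\P^n)\cdot\ms(X)=\mC(\P^n)\cdot\kappa\Bigl(\frac{\mC_{\Gamma}(C_0X)}{\mC_{\Gamma}(\C^{n+1})}\Bigr)=\frac{(1+yt)^{n+1}}{(1+y)\,(1+yt)^{n+1}}\,\kappa\,\mC_{\Gamma}(C_0X),\]
which collapses to the desired $\mC(X)=\frac{1}{1+y}\kappa\,\mC_{\Gamma}(C_0X)$.

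The only step that requires any care is the identification $\mC_{\Gamma}(\C^{n+1})=(1+yt)^{n+1}$, where one must keep track of the duality convention between the standard representation and its inverse $t$; after that the identity is a one-line rearrangement. The extraction of the factor $1/(1+y)$ is essentially the observation that $\mC(\P^n)$ and $\kappa\mC_{\Gamma}(\C^{n+1})$ differ precisely by the factor $1+y$ coming from the trivial factor in the Euler sequence used in Example \ref{tdypn}.
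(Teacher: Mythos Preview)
Your proposal is correct and follows exactly the approach the paper leaves implicit: the paper states the corollary without a separate proof, treating the passage from the Segre identity to the Chern identity as the same routine translation already signaled before Theorem \ref{cx2x-k} (``We can translate the result to motivic Chern class easily''). Your computation that $\kappa\,\mC_{\Gamma}(\C^{n+1})=(1+yt)^{n+1}=(1+y)\,\mC(\P^n)$ is precisely the missing detail, and it is handled correctly, including the sign/duality convention for $t$.
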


\subsection{The projective to affine formula} Interestingly, the calculation of $\mC_{\T}(C_0X)$ from $\mC_{\T}(X)$ is more involved than the obvious formula of Proposition \ref{x2cx-coho} for the corresponding cohomology classes. The projective cone formula \ref{xhat} already indicates the subtleties ahead of us. Finding such formula is important since some motivic Chern class calculations are simpler in the projective case than in the affine case, as unpublished works of B. K\H om\H uves show. In this section we give an \quot{inverse} to Theorem \ref{cx2x-k}:
\begin{theorem} \label{gxscalar-proj2affine} Suppose that the  torus $\T$ acts linearly on $\C^{n+1}$ and $X\subset \P^n$. Then
  \[\mC_{\Gamma\times \T}(C_0X)=(1+y)\big(\mC_{\T}(X)-\chi_y(X)[0]_{\Gamma\times \T}\big), \]
  where $[0]_{\Gamma\times \T}$ is the $\Gamma\times \T$-equivariant $K$-class of the origin, and $\mC_{\T}(X)$ is written in the reduced form in the variable $t$, the $\T$-equivariant class of the tautological bundle $\gamma$.
\end{theorem}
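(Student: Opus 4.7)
The plan is to resolve the affine cone by blowing up the origin, use the motivic and homology properties of $\mc$ to reduce to a push-forward from the blow-up, and compute that push-forward via a universal identity for the blow-down map. Set $Y = \mathrm{Bl}_0\C^{n+1} = \mathrm{Tot}(\gamma_{\P^n})$, with blow-down $\pi\colon Y\to\C^{n+1}$ and bundle projection $p\colon Y\to\P^n$, and write $\tau := [\gamma_{\P^n}]_\T \in K_\T(\P^n)$ (the Kirwan map sends $t\mapsto\tau$). The strict transform of the affine cone is $E := \mathrm{Tot}(\gamma|_X) = p^{-1}(X)$, and $\varphi := \pi|_E$ is an isomorphism off the zero section $s(X)\cong X$, which it collapses to $\{0\}$. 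By the motivic and homology properties of $\mc$, together with the factorization of $\varphi\circ s$ through the point,
\begin{equation*}
\mc_{\Gamma\times\T}(C_0X) = \varphi_!\mc(E) - (\varphi\circ s)_!\mc(X) = \varphi_!\mc(E) - \chi_y(X)\cdot[0]_{\Gamma\times\T}.
\end{equation*}
Since $p$ is smooth and $E = p^{-1}(X)$, Proposition \ref{segre-trans} together with $\mc_{\Gamma\times\T}(Y) = p^*\bigl(\mc_\T(\P^n)\cdot\lambda_y(\gamma^*_{\P^n})\bigr)$ yields
\begin{equation*}
\mc_{\Gamma\times\T}(E\subset Y) = p^*\bigl(\mc_\T(X)\cdot(1 + yt\tau^{-1})\bigr),
\end{equation*}
where the factor $1+yt\tau^{-1} = \lambda_y(\gamma^*_{\P^n})$ carries the $\Gamma$-weight $t$ of $\gamma^*_{\P^n}$.

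The heart of the proof is a universal formula for $\pi_!\circ p^*$: for every $\omega\in K_\T(\P^n)$,
\begin{equation*}
\pi_!(p^*\omega) = \widetilde\omega|_{\tau\mapsto t},
\end{equation*}
where $\widetilde\omega$ denotes the reduced form in $\tau$. To prove this, tensor the Koszul sequence $0\to p^*\gamma^*_{\P^n}\to\mathcal O_Y\to\mathcal O_{\P^n}\to 0$ with $p^*\gamma^{\otimes k}_{\P^n}$ and push forward by $\pi$: using $\pi_!\mathcal O_Y = 1$ and the vanishing $\chi(\P^n,\mathcal O(-k)) = 0$ for $0<k\le n$, one finds $\pi_![p^*\gamma^{\otimes k}_{\P^n}]_{\Gamma\times\T} = 1$, hence $\pi_!(p^*\tau^k) = t^k$ for $0\le k\le n$. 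This extends to all $\omega$ by $R(\Gamma\times\T)$-linearity. A manipulation of reduced forms using the defining relation $\prod_{i=1}^{n+1}(1-t/\beta_i) = [0]_{\Gamma\times\T}$ then yields the companion identity
\begin{equation*}
t\cdot\widetilde{\omega\tau^{-1}}|_{\tau\mapsto t} = \widetilde\omega|_{\tau\mapsto t} - \chi_y(\omega)\cdot[0]_{\Gamma\times\T},\qquad \chi_y(\omega):=\omega|_{\tau=0}.
\end{equation*}

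Applying both identities with $\omega = \mc_\T(X)$ (so $\chi_y(\omega) = \chi_y(X)$) to expand $\pi_!(p^*\mc_\T(X)) + yt\cdot\pi_!(p^*(\mc_\T(X)\tau^{-1}))$ gives
\begin{equation*}
\varphi_!\mc(E) = (1+y)\widetilde{\mc_\T(X)} - y\chi_y(X)\cdot[0]_{\Gamma\times\T},
\end{equation*}
and substituting into the first display yields the theorem. The main obstacle is the companion reduced-form identity, whose proof requires careful bookkeeping of the elementary symmetric functions in $\beta_1,\ldots,\beta_{n+1}$ encoded in $[0]_{\Gamma\times\T}$, as well as verifying that passing from smooth $X$ to the general constructible case is compatible with the motivic extension on both sides.
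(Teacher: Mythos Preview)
Your proof is correct and takes a genuinely different route from the paper's.

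Both arguments start from the same blow-up picture (your $\varphi_!\mc(E)-\chi_y(X)[0]$ is exactly the paper's equation \eqref{eq-integral}, since $\pi_!p^*(-)=\int_{\P^n}(-)\cdot e(\C^{n+1}/\gamma)$). The divergence is in how the push-forward is evaluated. The paper deliberately avoids computing that integral: it only uses \eqref{eq-integral} to observe that the left side is a $K_\T[y]$-module map in the reduced form of $\mC_\T(X)$, then verifies the identity on the basis $\{\mC_\T(\P^k\subset\P^n)\}_{k=0}^n$ (Example \ref{ex:proj}) and invokes motivicity. You instead compute $\pi_!p^*$ directly. Your universal formula $\pi_!(p^*\tau^k)=t^k$ for $0\le k\le n$ is correct (the vanishing $H^*(\P^n,\mathcal O(-k))=0$ for $0<k\le n$ gives it equivariantly, not just numerically), and the companion identity is then a one-line check on the same basis $\omega=\tau^k$. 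So what the paper calls ``quite difficult'' you carry out cleanly; the paper's approach is more hands-off, while yours yields an explicit closed formula for $\pi_!p^*$ that is of independent interest.

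Two small points. First, Proposition \ref{segre-trans} as stated requires the map to be proper, and $p:Y\to\P^n$ is not; but since $p$ is a vector bundle projection, for smooth $X$ the identity $\mc(E)=p^*\bigl(\mc(X)\lambda_y(\gamma^*)\bigr)$ follows directly from multiplicativity of $\lambda_y$ on the sequence $0\to p^*\gamma\to TY\to p^*T\P^n\to 0$, and then motivicity handles constructible $X$. Second, your two ``obstacles'' are not really obstacles: the companion identity is forced by $R(\T)$-linearity once you check it on $\omega=1,\tau,\ldots,\tau^n$ (and you implicitly use that $\widetilde\omega|_{\tau=0}$ equals the equivariant integral $\int_{\P^n}^\T\omega$, which together with rigidity gives $\chi_y(\mC_\T(X))=\chi_y(X)$); and the passage to constructible $X$ is automatic because every term in your argument---$\mc(E\subset Y)$, $\chi_y(X)$, and $\mc(C_0X)$---is already motivic in $X$.
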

Notice that the $\chi_y$ genus has no $\T$-equivariant version. This is called the \emph{rigidity} of the $\chi_y$ genus, see e.g. \cite[pr. 7.2]{Weber=EquivariantHirzebruch}.

Before proving the theorem let us have a look at the case when $\T$ is the trivial torus.
\begin{corollary} \label{scalar-proj2affine} Suppose that $X\subset\P^n$ is a constructible subset. Then the affine cone minus the origin $C_0X\subset \C^{n+1}$ is invariant for the scalar action of $\Gamma=\GL(1)$, and
  \[\mC_{\Gamma}(C_0X)=(1+y)\big(\mC(X)-\chi_y(X)[0]\big), \]
  where $[0]=(1-t)^{n+1}$ is the $\Gamma$-equivariant $K$-class of the origin, and $\mC(X)$ is written in the reduced form in the variable $t$, the $K$-theory class of the tautological bundle $\gamma$.
\end{corollary}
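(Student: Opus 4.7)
The plan is to combine the Kirwan-side information from Corollary~\ref{T=1} with a resolution-based calculation that pins down the remaining freedom. This gives a self-contained proof and at the same time illustrates the strategy for the full Theorem~\ref{gxscalar-proj2affine}.

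By Corollary~\ref{T=1}, the Kirwan-type map $\kappa\colon K_\Gamma(\C^{n+1}) \to K(\P^n)$, whose kernel is the principal ideal generated by $[0]_\Gamma = (1-t)^{n+1}$, satisfies $\kappa(\mC_\Gamma(C_0X)) = (1+y)\mC(X)$. Hence we may write
\[
\mC_\Gamma(C_0X) \;=\; (1+y)\,\tilde{\mC}(X) + r(t,y)\,(1-t)^{n+1}
\]
for some $r(t,y) \in \Z[t,t^{-1}][y]$, where $\tilde{\mC}(X)$ denotes the reduced form of $\mC(X)$. Both sides of the sought identity are motivic in $X$, so it is enough to determine $r(t,y)$ for smooth $X \subset \P^n$.

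For such $X$, take the blowup of $\C^{n+1}$ at the origin restricted to the proper transform of $CX$: this yields a smooth proper resolution $\pi \colon E \to \C^{n+1}$, where $E = \mathrm{tot}(\mathcal{O}_X(-1))$ and $\pi$ restricts to an isomorphism over $C_0X$ while collapsing the zero section $X \subset E$ to the origin. The zero section is a smooth divisor, so this is a normal-crossing resolution, and applying the motivic property together with the homology property~\eqref{homology-property} gives
\[
\mC_\Gamma(C_0X) \;=\; \pi_!\bigl(\lambda_y(T^*E)\bigr) - \chi_y(X)\,(1-t)^{n+1},
\]
since $(\pi|_X)_!\lambda_y(T^*X) = \chi_y(X)\,[0]_\Gamma$ as $\pi|_X\colon X\to\{0\}$ is constant and $\Gamma$ acts trivially on $X$.

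The crux is the computation of $\pi_!(\lambda_y(T^*E))$. Factor $\pi$ through the full blowup $\sigma\colon B = \mathrm{Bl}_0\C^{n+1} \to \C^{n+1}$; then the embedding $j\colon E \hookrightarrow B$ arises by base change from $X \hookrightarrow \P^n$ along the natural projection $\pi_B\colon B \to \P^n$. Using $T^*E \cong \pi_E^*(T^*X \oplus (\gamma|_X)^*)$ together with base change and the projection formula rewrites $j_!(\lambda_y(T^*E))$ in $K_\Gamma(B)$ as the $\pi_B$-pullback of $(1+yL)\,\mC(X\subset\P^n)$, where $L = \gamma^*$ on $\P^n$. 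Pushing forward by $\sigma$ (which contracts the exceptional $\P^n\subset B$ to the origin of $\C^{n+1}$) and applying the integration formula~\eqref{integral} on $\P^n$ should give
\[
\pi_!(\lambda_y(T^*E)) \;=\; (1+y)\,\tilde{\mC}(X) - y\,\chi_y(X)(1-t)^{n+1},
\]
and substituting this into the previous display yields the corollary. The main obstacle is this last base-change computation, in which one must track the equivariant Euler classes through the blowup diagram. A useful consistency check is the identity $\tilde{\mC}(X)|_{t=0} = \chi_y(X)$: writing $\mC(X) = \sum q_i H^i$, the reduced form becomes $\sum q_i(1-t)^i$, whose value at $t=0$ equals $\sum q_i = \int_{\P^n}\mC(X) = \chi_y(X)$ by~\eqref{integral}.
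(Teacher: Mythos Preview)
Your setup is sound and in fact lands exactly on the paper's equation~\eqref{eq-integral}: for smooth $X$ the blowup resolution expresses $\mC_\Gamma(C_0X)$ as $\pi_!\lambda_y(T^*E)-\chi_y(X)[0]_\Gamma$, and this is linear in the reduced form of $\mC(X)$. But you then try to evaluate $\pi_!\lambda_y(T^*E)$ directly via base change through the blowup, and this is precisely the step you yourself flag as ``the main obstacle'' and do not carry out. The phrase ``should give'' and the subsequent consistency check are not a proof; the base-change/projection-formula manipulation on $B=\mathrm{Bl}_0\C^{n+1}$ requires tracking the $\Gamma$-equivariant structure on the fibres of $\gamma$ and computing $\sigma_!$ on $K_\Gamma(B)$, and you have not done this. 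So as written the argument has a genuine gap at its central computation.

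The paper's proof avoids this computation entirely. Having arrived at \eqref{eq-integral}, it observes only that the right-hand side is a $K[y]$-module homomorphism applied to the reduced form of $\mC(X)$; the same is manifestly true of the target expression $(1+y)\bigl(\mC(X)-\chi_y(X)[0]\bigr)$. Hence it suffices to check the identity on a $K[y]$-basis of polynomials of degree $\le n$ in $t$, and the classes $\mC(\P^k\subset\P^n)$ for $0\le k\le n$ provide such a basis (their $y=0$ specialisations are $(1-t)^{n-k}$). The verification for $X=\P^k$ is a short explicit calculation. Motivicity then extends the formula to all constructible $X$. Your Kirwan reduction via Corollary~\ref{T=1} is a pleasant shortcut for the non-equivariant case, but it still leaves an unknown $r(t,y)$, and the linearity-plus-basis trick is what actually pins it down without having to push anything through the blowup.
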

An other way to express Corollaries \ref{git} and \ref{scalar-proj2affine} together is that written in the variable $H=1-t$ the coefficients of $(1+y)\mC(X)$ (in the reduced form) and $\mC_{\Gamma}(C_0X)$ are the same, except $\mC_{\Gamma}(C_0X)$ has also an $n+1$'st coefficient to assure that  the sum of the coefficients is zero.

Comparing with Proposition \ref{hilbert-function} we can see an important difference between the sheaf $K$-class and the motivic $K$-class: The scalar-equivariant motivic $K$-class of the cone of $X$ contains no additional information than the motivic $K$-class of $X$, on the other hand the scalar-equivariant sheaf $K$-class of the cone of $X$ determines the Hilbert function not just the Hilbert polynomial of $X$.

\begin{example} Let us study the case $X=\P^k\subset\P^n$. Then by Remark \ref{mcpn} and the simple fact that $i_!(H^j)=H^{j+n-k}$ for the inclusion $i:X\to\P^n$ we have
  \[ \mC(\P^k\subset\P^n)=\sum_{i=0}^{k}\binom{k+1}{i}(-y)^i(1+y)^{k-i}H^{n-k+i}.\]
  For the cone we have the product formula (see \cite[\S 2.7]{guang})
  \[  \mC_\Gamma(\C^{k+1}\subset\C^{n+1})=(1-t)^{n-k}(1+yt)^{k+1},  \]
  so removing the origin we get
  \[  \mC_\Gamma(C_0X\subset\C^{n+1})=(1-t)^{n-k}(1+yt)^{k+1}-(1-t)^{n+1}. \]
  We know from Example \ref{tdypn} that $\chi_y(X)=1-y+y^2-\cdots\pm y^k=\frac{1-(-y)^{k+1}}{1+y}$ so Corollary \ref{scalar-proj2affine} gives the identity
  \[ \left((1+y)\sum_{i=0}^{k}\binom{k+1}{i}(-y)^i(1+y)^{k-i}H^{n-k+i}\right)-(1-(-y)^{k+1})H^{n+1}=(1-t)^{n-k}(1+yt)^{k+1}-(1-t)^{n+1}, \]
  where $H=1-t$, which of course can be checked directly.
\end{example}

 The idea of the proof of Theorem \ref{gxscalar-proj2affine} is that we first prove it for the special case of $X$ being a projective space, and show how this result implies the result for general $X$.

\begin{example} \label{ex:proj} Suppose that the torus $\T$ acts on $\C^{n+1}$ with characters $\beta_1,\dots,\beta_{n+1}$. Let  $X=\P^k\subset \P^n$ be invariant for the induced $\T$-action on $\P^n$. Without loss of generality we can assume that $CX$ is spanned by the first $k+1$ eigenvectors. Then
 \[  \mC_{\Gamma\times\T}(\C^{k+1}\setminus0\subset \C^{n+1})  = M-R,\]
 where
 \[M:=\prod_{i=1}^{k+1}\left(1+\frac {yt}\beta_i\right)\prod_{i=k+2}^{n+1}\left(1-\frac t\beta_i\right),\]
 and
 \[R:=[0]_{\Gamma\times \T}=\prod_{i=1}^{n+1}\left(1-\frac t\beta_i\right),\]
 where $R$ is also the relation in $K_{\T}(\P^n)$ after identifying $t$ with the $\T$-equivariant class of $\gamma$. On the other hand we have
 \[ (1+y)\mC_{\Gamma\times\T}(\P^k\subset \P^n)\equiv M,\]
 but this is not the reduced form yet, the coefficient of $t^{n+1}$ is not zero. Comparing $M$ and $R$ we can  see that the reduced form is
  \[ (1+y)\mC_{\T}(\P^k\subset \P^n)= M-(-y)^{k+1}R,\]
so the right hand side of Theorem \ref{gxscalar-proj2affine} becomes
 \[ (1+y)(\mC_{\T}(\P^k\subset \P^n)-\chi_y(\P^k)R)= M-(-y)^{k+1}R-(1-(-y)^{k+1})R,\]
 since $(1+y) \chi_y(\P^k)=1-(-y)^{k+1}$. Consequently we see that Theorem \ref{gxscalar-proj2affine} holds for these examples.
\end{example}

The next step is to prove Theorem \ref{gxscalar-proj2affine} for $X$ being a $\T$-invariant smooth subvariety of $\P^n$. In this case the blowup of $\C^{n+1}$ at the origin provides a $\varphi:Y\to \C^{n+1}$ proper normal crossing extension for $C_0X$, where $Y$ is the total space of the restriction of the tautological bundle $\gamma\to\P^n$ to $X$. The resolution factors as
\[ \xymatrix{Y\ar[r]^(0.3)j&\P^n\times \C^{n+1}\ar[r]^(0.57)\pi &\C^{n+1}},\]
which implies that

\begin{equation}\label{eq-integral}
  \mC_{\Gamma\times\T}(C_0X)=\int_{\P^n} \mC_{\T}(X)\lambda_y(\gamma^*)e(\C^{n+1}/\gamma)\ -\ e(\C^{n+1})\int_{\P^n} \mC_{\T}(X),
\end{equation}
where the $\lambda_y$ class and the Euler classes are $\Gamma\times\T$-equivariant.

It is quite difficult to use \eqref{eq-integral} for calculations. Luckily we do not need it. We only need to notice that \eqref{eq-integral} implies that the left hand side can be calculated from the reduced form of $\mC_{\T}(X)$ providing a $K_{\T}[y]$-module homomorphism. This implies that it is enough to check \ref{gxscalar-proj2affine} for a basis of the space of polynomials of degree at most $n$ in the variable $t$ and coefficients in $K_{\T}[y]$. We claim that the cases of Example \ref{ex:proj} will give such a basis. Indeed, substituting $\beta_i=1$ and $y=0$ we get $\mC_0(\P^k\subset\P^n)=(1-t)^{n-k}$.

The last step is to extend the result to all $\T$-invariant constructible subsets of $\P^n$. For that we just have to notice that all 3 components of the formula are motivic and we finished the proof of Theorem \ref{gxscalar-proj2affine}.\qed

Forgetting the scalar action we still get a nontrivial statement:
\begin{theorem} \label{g-proj2affine} Suppose that the  torus $\T$ acts linearly on $\C^{n+1}$. Then
  \[\mC_{ \T}(C_0X)=(1+y)\big(\mC_{\T}(X)|_{t=1}-\chi_y(X)[0]_{ \T}\big), \]
  where $[0]_{ \T}$ is the $\T$-equivariant $K$-class of the origin, and $\mC_{\T}(X)$ is written in the reduced form in the variable $t$.
\end{theorem}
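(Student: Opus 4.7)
The plan is to deduce Theorem \ref{g-proj2affine} from Theorem \ref{gxscalar-proj2affine} by applying the forgetful/restriction homomorphism in equivariant $K$-theory corresponding to the subgroup inclusion $\T\hookrightarrow\Gamma\times\T$, $\tau\mapsto(1,\tau)$. The whole point is that Theorem \ref{gxscalar-proj2affine} already does the real geometric work; what remains is a purely formal manipulation.

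First I would identify the restriction map on the relevant $K$-theory rings. Since $\C^{n+1}$ is equivariantly contractible, $K_{\Gamma\times\T}(\C^{n+1})=R(\Gamma\times\T)=R(\T)[t,t^{-1}]$ and $K_{\T}(\C^{n+1})=R(\T)$, where $t$ is the $\Gamma$-character (the inverse of the standard representation, as in Remark \ref{rem:awkward}). The inclusion $\tau\mapsto(1,\tau)$ sends the character $t$ to the trivial $\T$-character $1$, so the restriction $\mathrm{res}\colon K_{\Gamma\times\T}(\C^{n+1})\to K_{\T}(\C^{n+1})$ is just the substitution $t\mapsto 1$ (leaving the $\alpha_i$ untouched).

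Next I would verify that the equivariant motivic Chern class is compatible with this restriction: for any $\T$-invariant (hence also $\Gamma\times\T$-invariant, since the $\Gamma$-action is scalar) constructible subset $Z\subset\C^{n+1}$,
\[
\mathrm{res}\bigl(\mC_{\Gamma\times\T}(Z)\bigr)=\mC_{\T}(Z).
\]
This is the standard functoriality of equivariant motivic Chern classes with respect to subgroup restriction; it follows from the construction in \cite{feher2018motivic} by reduction (via the motivic property and normal crossing resolutions as in \eqref{pnc}) to the smooth case, where the claim is the obvious compatibility $\mathrm{res}\,\lambda_y(T^*Z_{\Gamma\times\T})=\lambda_y(T^*Z_{\T})$.

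Applying $\mathrm{res}$ to both sides of Theorem \ref{gxscalar-proj2affine} then yields exactly the desired formula: the left hand side becomes $\mC_{\T}(C_0X)$ by the compatibility just stated; on the right, the reduced form of $\mC_{\T}(X)$, viewed as a polynomial in $t$ with $K_{\T}$-coefficients, lifts canonically to $K_{\Gamma\times\T}(\C^{n+1})=R(\T)[t,t^{-1}]$ (this is precisely how Theorem \ref{gxscalar-proj2affine} interprets the expression) and restricts to $\mC_{\T}(X)|_{t=1}$; the class of the origin satisfies $\mathrm{res}([0]_{\Gamma\times\T})=[0]_{\T}$; and $\chi_y(X)\in\Z[y]$ is a scalar invariant with no equivariance to restrict, consistent with the rigidity remark following the statement. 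The main (minor) obstacle is the verification of the compatibility $\mathrm{res}\circ\mC_{\Gamma\times\T}=\mC_{\T}$, but this is a formal property of the construction rather than a substantive computation; once it is in hand, Theorem \ref{g-proj2affine} follows by a single application of $\mathrm{res}$.
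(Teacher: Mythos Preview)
Your proposal is correct and matches the paper's approach exactly: the paper simply says ``Forgetting the scalar action we still get a nontrivial statement'' and then states Theorem \ref{g-proj2affine}, so the entire proof is the restriction $t\mapsto 1$ from $\Gamma\times\T$- to $\T$-equivariant $K$-theory applied to Theorem \ref{gxscalar-proj2affine}. You have merely spelled out the formal compatibility steps that the paper leaves implicit.
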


\bibliography{k-classes}
\bibliographystyle{alpha}

\end{document}